\providecommand{\U}[1]{\protect \rule{.1in}{.1in}}
\newtheorem{theorem}{Theorem}[section]
\newtheorem{corollary}[theorem]{Corollary}
\newtheorem{definition}[theorem]{Definition}
\newtheorem{example}[theorem]{Example}
\newtheorem{lemma}[theorem]{Lemma}
\newtheorem{proposition}[theorem]{Proposition}
\newtheorem{remark}[theorem]{Remark}
\newenvironment{proof}[1][Proof]{\noindent \textbf{#1.} }{\  $\Box$}
\numberwithin{equation}{section}
\begin{document}

\title{Limit theorems under nonlinear expectations dominated by sublinear expectations}
\author{Xiaojuan Li\thanks{Department of Mathematics, Qilu Normal University, Jinan
250200, China. lxj110055@126.com. }
\and Mingshang Hu \thanks{Zhongtai Securities Institute for Financial Studies,
Shandong University, Jinan, Shandong 250100, China. humingshang@sdu.edu.cn.
Research supported by NSF (No. 12326603, 11671231).} }
\maketitle

\textbf{Abstract}. In this paper, we obtain a new estimate for uniform integrability under sublinear expectations. Based on this, we establish the limit theorems under nonlinear expectations dominated by sublinear expectations through tightness, and the limit distributions can be completely nonlinear. Finally, we study the limit theorem in a special case, where the
limit distribution satisfies positive homogeneity.

{\textbf{Key words}. } Nonlinear expectation, Convex expectation, Sublinear
expectation, Tightness, Converge in distribution

\textbf{AMS subject classifications.} 60F05, 60H10

\addcontentsline{toc}{section}{\hspace*{1.8em}Abstract}

\section{Introduction}

Let $(\Omega,\mathcal{H},\mathbb{\hat{E})}$ be a sublinear expectation space,
and let $\{(X_{n},Y_{n})\}_{n=1}^{\infty}$ be an i.i.d. sequence of
$2d$-dimensional random vectors, i.e., $(X_{n+1},Y_{n+1})\overset{d}{=}%
(X_{1},Y_{1})$ and $(X_{n+1},Y_{n+1})$ is independent of $(X_{1}$,$\ldots
$,$X_{n}$,$Y_{1}$,$\ldots$,$Y_{n})$ for $n\geq1$. Suppose that%
\begin{equation}
\mathbb{\hat{E}}[X_{1}]=\mathbb{\hat{E}}[-X_{1}]=0\text{ and }\lim
_{N\rightarrow \infty}\mathbb{\hat{E}}\left[  (|X_{1}|^{2}-N)^{+}%
+(|Y_{1}|-N)^{+}\right]  =0.\label{e0-0}%
\end{equation}
Peng \cite{P2008, P2010, P2019-1, P2019} showed that%
\begin{equation}
\lim_{n\rightarrow \infty}\mathbb{\hat{E}}\left[  \phi \left(  \frac{1}{\sqrt
{n}}\sum_{i=1}^{n}X_{i},\frac{1}{n}\sum_{i=1}^{n}Y_{i}\right)  \right]
=u^{\phi}(1,0,0)\label{e0-1}%
\end{equation}
for each $\phi \in C_{b.Lip}(\mathbb{R}^{2d})$, where $u^{\phi}$ is the unique
viscosity solution of the following PDE:%
\[
\partial_{t}u-G(D_{x}^{2}u,D_{y}u)=0\text{, }u(0,x,y)=\phi(x,y),
\]
and%
\[
G(A,p):=\mathbb{\hat{E}}\left[  \frac{1}{2}\langle AX_{1},X_{1}\rangle+\langle
p,Y_{1}\rangle \right]  \text{ for }(A,p)\in \mathbb{S}_{d}\times \mathbb{R}^{d}.
\]
Moreover, Chen \cite{Chen} obtained the strong laws of large numbers and Zhang
\cite{Zhang} studied the laws of the iterated logarithm under sublinear
expectations. The $\alpha$-stable limit theorem under sublinear expectations
was first obtained by Bayraktar and Munk in \cite{BM}, and the latest
developments can be found in \cite{HJLP, JL}. The central limit theorem with
mean uncertainty under sublinear expectations was studied in \cite{CE, GL}.
For the convergence rate of (\ref{e0-1}), the reader may refer to \cite{FP,
HL, Kr, Song, ZSG} and the references therein.

In some economic and financial problems, we sometimes need to consider convex
expectations or nonlinear expectations (see \cite{FS}). So it is natural to
consider the limit theorem under a nonlinear expectation, that is, replace the
sublinear expectation $\mathbb{\hat{E}}[\cdot]$ with the nonlinear expectation
$\mathbb{\tilde{E}}[\cdot]$ in (\ref{e0-1}). In \cite{Hu}, we found that the
limit distribution is characterized by the following PDE:%
\[
\partial_{t}u-\tilde{G}(D_{x}^{2}u,D_{y}u)=0\text{, }u(0,x,y)=\phi(x,y),
\]
where
\begin{equation}
\tilde{G}(A,p):=\lim_{n\rightarrow \infty}n\mathbb{\tilde{E}}\left[  \frac
{1}{n}\left(  \frac{1}{2}\langle AX_{1},X_{1}\rangle+\langle p,Y_{1}%
\rangle \right)  \right]  \text{ for }(A,p)\in \mathbb{S}_{d}\times
\mathbb{R}^{d}.\label{e0-2}%
\end{equation}
Unfortunately, $\tilde{G}(\cdot)$ satisfies positive homogeneity (see
Proposition \ref{pro-14}). In particular, $\tilde{G}(\cdot)$ is sublinear if
$\mathbb{\tilde{E}}[\cdot]$ is a convex expectation. Therefore, an interesting
problem is how to obtain the limit distribution that does not satisfy positive
homogeneity under nonlinear expectations.

Inspired by the relation (3.11) in the representation theorem for a kind of
consistent convex expectation in \cite{LH}, in order to obtain a more general
$\tilde{G}(\cdot)$, we must change the distribution of $(X_{n,1}%
,Y_{n,1}):=(\frac{1}{\sqrt{n}}X_{1},\frac{1}{n}Y_{1})$ under $\mathbb{\tilde
{E}}[\cdot]$ (see Examples \ref{ex-1} and \ref{ex-2}). So, we consider a
sequence of $2d$-dimensional random vectors $\{(X_{n,i},Y_{n,i}):n\in
\mathbb{N}$, $i=1$,$\ldots$,$n\}$. In order to apply the relationship between
tightness and weak compactness under nonlinear expectations introduced in
\cite{P2010}, we need a sublinear expectation $\mathbb{\hat{E}}[\cdot]$ that
dominates the nonlinear expectation $\mathbb{\tilde{E}}[\cdot]$, and we can
assume $(X_{n,1},Y_{n,1})\overset{d}{=}(\frac{1}{\sqrt{n}}X_{1},\frac{1}%
{n}Y_{1})$ under $\mathbb{\hat{E}}[\cdot]$. Under the assumption
\begin{equation}
\mathbb{\hat{E}}[|X_{1}|^{2+\gamma}+|Y_{1}|^{1+\gamma}]<\infty \text{ for some
positive constant }\gamma,\label{e0-4}%
\end{equation}
Peng used tightness to prove (\ref{e0-1}) in \cite{P2010}. In this paper, we
study the limit of $\mathbb{\tilde{E}}[\phi(\sum_{i=1}^{n}X_{n,i},\sum
_{i=1}^{n}Y_{n,i})]$ for each $\phi \in C_{b.Lip}(\mathbb{R}^{2d})$ under the
uniformly integrable condition (\ref{e0-0}), which is weaker than condition
(\ref{e0-4}).

The difficulty lies in establishing the uniformly integrable condition
(\ref{e2-10}) for the limit distribution of a subsequence of $(\sum_{i=1}%
^{n}X_{n,i},\sum_{i=1}^{n}Y_{n,i})$. By discovering a good property of
$\sum_{i<j}\langle X_{n,i},X_{n,j}\rangle$, we obtain estimate (\ref{e2-4}).
As far as we know, (\ref{e2-4}) is completely new in the literature. Based on
(\ref{e2-4}), we can obtain (\ref{e2-10}). Under the additional assumption
(\ref{e2-01}), we establish the limit theorem under the nonlinear expectation
$\mathbb{\tilde{E}}[\cdot]$.

Recently, Blessing and Kupper \cite{BK} first obtained
\begin{equation}
\lim_{n\rightarrow \infty}\frac{1}{n}\mathbb{\tilde{E}}\left[  n\phi \left(
\frac{1}{\sqrt{n}}\sum_{i=1}^{n}X_{i},\frac{1}{n}\sum_{i=1}^{n}Y_{i}\right)
\right]  =u^{\phi}(1,0,0)\label{e0-5}%
\end{equation}
for each $\phi \in C_{b.Lip}(\mathbb{R}^{2d})$, where $\mathbb{\tilde{E}}%
[\cdot]$ is a convex expectation and $u^{\phi}$ is the unique viscosity
solution of PDE (\ref{e2-25}). The convergence rate of (\ref{e0-5}) was
established in \cite{BJ}. By constructing a new nonlinear expectation
$\mathbb{\tilde{E}}_{1}[\cdot]$ satisfying%
\[
\mathbb{\tilde{E}}_{1}[\Phi(X_{n,1},\ldots,X_{n,n},Y_{n,1},\ldots
,Y_{n,n})]=\frac{1}{n}\mathbb{\tilde{E}}\left[  n\Phi \left(  \frac{1}{\sqrt
{n}}X_{1},\ldots,\frac{1}{\sqrt{n}}X_{n},\frac{1}{n}Y_{1},\ldots,\frac{1}%
{n}Y_{n}\right)  \right]
\]
for each $\Phi \in C_{b.Lip}(\mathbb{R}^{2nd})$ and applying Theorem
\ref{th-8}, we can obtain that (\ref{e0-5}) still holds true for any nonlinear
expectation $\mathbb{\tilde{E}}[\cdot]$ dominated by the sublinear expectation
$\mathbb{\hat{E}}[\cdot]$.

This paper is organized as follows. We recall some basic definitions and
results of nonlinear expectations in Section 2. In Section 3, we present and
prove the main limit theorems under nonlinear expectations dominated by the
sublinear expectation. In Section 4, we study the limit theorem in a special case, where the
limit distribution satisfies positive homogeneity.

\section{Preliminaries}

Let $\Omega$ be a given nonempty set. The set $\mathcal{H}$ is a linear space
of real valued functions defined on $\Omega$ satisfying $|\xi|\in \mathcal{H}$
if $\xi \in \mathcal{H}$ and $\phi(\xi_{1},\ldots,\xi_{n})\in \mathcal{H}$ for
each $n\in \mathbb{N}$, $\phi \in C_{b.Lip}(\mathbb{R}^{n})$, $\xi_{i}%
\in \mathcal{H}$ with $i\leq n$, where $C_{b.Lip}(\mathbb{R}^{n})$ denotes the
space of bounded Lipschitz functions on $\mathbb{R}^{n}$. $\mathbb{\tilde{E}%
}:\mathcal{H}\rightarrow \mathbb{R}$ is called a nonlinear expectation if it
satisfies $\mathbb{\tilde{E}}[\xi_{1}]\geq \mathbb{\tilde{E}}[\xi_{2}]$ if
$\xi_{1}\geq \xi_{2}$ and $\mathbb{\tilde{E}}[c]=c$ for each $c\in \mathbb{R}$.
A nonlinear expectation $\mathbb{\tilde{E}}[\cdot]$ is called a convex
expectation if it satisfies $\mathbb{\tilde{E}}[\rho \xi_{1}+(1-\rho)\xi
_{2}]\leq \rho \mathbb{\tilde{E}}[\xi_{1}]+(1-\rho)\mathbb{\tilde{E}}[\xi_{2}]$
for each $\rho \in \lbrack0,1]$. A nonlinear expectation $\mathbb{\hat{E}}%
[\cdot]$ is called a sublinear expectation if it satisfies $\mathbb{\hat{E}%
}[\xi_{1}+\xi_{2}]$ $\leq \mathbb{\hat{E}}[\xi_{1}]+\mathbb{\hat{E}}[\xi_{2}]$
and $\mathbb{\hat{E}}[\lambda \xi]=\lambda \mathbb{\hat{E}}[\xi]$ for each
$\lambda \geq0$. The triple $(\Omega,\mathcal{H},\mathbb{\tilde{E}})$ (resp.
$(\Omega,\mathcal{H},\mathbb{\hat{E}})$) is called a nonlinear (resp.
sublinear) expectation space. The statement that a nonlinear expectation
$\mathbb{\tilde{E}}[\cdot]$ is dominated by a sublinear expectation
$\mathbb{\hat{E}}[\cdot]$ means that%
\begin{equation}
\mathbb{\tilde{E}}[\xi_{1}]-\mathbb{\tilde{E}}[\xi_{2}]\leq \mathbb{\hat{E}%
}[\xi_{1}-\xi_{2}]\text{ for each }\xi_{1}\text{, }\xi_{2}\in \mathcal{H}.
\label{e1-1}%
\end{equation}

Let $\mathbb{\hat{E}}[\cdot]$ be a sublinear expectation on $(\Omega
,\mathcal{H})$. The space $L^{1}(\Omega)$ is the completion of $\mathcal{H}$
under the norm $||\xi||:=\mathbb{\hat{E}}[|\xi|]$, and we set $L^{p}%
(\Omega)=\{ \xi \in L^{1}(\Omega):|\xi|^{p}\in L^{1}(\Omega)\}$ for $p\geq1$.
If a nonlinear expectation $\mathbb{\tilde{E}}[\cdot]$ is dominated by
$\mathbb{\hat{E}}[\cdot]$, then $\mathbb{\hat{E}}[\cdot]$ and $\mathbb{\tilde
{E}}[\cdot]$ can be continuously extended to $L^{1}(\Omega)$, and (\ref{e1-1})
still holds. The following important property is Proposition 1.3.8 in
\cite{P2019}.

\begin{proposition}
\label{pro-1}Suppose that a nonlinear expectation $\mathbb{\tilde{E}}[\cdot]$
is dominated by a sublinear expectation $\mathbb{\hat{E}}[\cdot]$ on
$(\Omega,\mathcal{H})$. Let $X\in L^{1}(\Omega)$ satisfy $\mathbb{\hat{E}%
}[X]=-\mathbb{\hat{E}}[-X]$. Then%
\begin{equation}
\mathbb{\tilde{E}}[\alpha X+Y]=\alpha \mathbb{\hat{E}}[X]+\mathbb{\tilde{E}%
}[Y]\text{ for each }Y\in L^{1}(\Omega)\text{, }\alpha \in \mathbb{R}.
\label{e1-2}%
\end{equation}

\end{proposition}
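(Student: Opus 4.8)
The plan is to exploit the domination inequality (\ref{e1-1}) in both directions to sandwich the quantity $\mathbb{\tilde{E}}[\alpha X+Y]-\mathbb{\tilde{E}}[Y]$ between two numbers that, thanks to the hypothesis $\mathbb{\hat{E}}[X]=-\mathbb{\hat{E}}[-X]$ and the positive homogeneity of $\mathbb{\hat{E}}[\cdot]$, turn out to be equal. First note that since $X,Y\in L^{1}(\Omega)$ we have $\alpha X+Y\in L^{1}(\Omega)$, so every expectation below is finite and (\ref{e1-1}) applies on $L^{1}(\Omega)$.

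Applying (\ref{e1-1}) with $\xi_{1}=\alpha X+Y$ and $\xi_{2}=Y$ gives
\[
\mathbb{\tilde{E}}[\alpha X+Y]-\mathbb{\tilde{E}}[Y]\leq \mathbb{\hat{E}}[\alpha X],
\]
and applying it with the roles reversed, $\xi_{1}=Y$ and $\xi_{2}=\alpha X+Y$, gives
\[
\mathbb{\tilde{E}}[Y]-\mathbb{\tilde{E}}[\alpha X+Y]\leq \mathbb{\hat{E}}[-\alpha X],
\]
so that
\[
-\mathbb{\hat{E}}[-\alpha X]\leq \mathbb{\tilde{E}}[\alpha X+Y]-\mathbb{\tilde{E}}[Y]\leq \mathbb{\hat{E}}[\alpha X].
\]

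The next step is to show that the two outer terms coincide and equal $\alpha \mathbb{\hat{E}}[X]$. If $\alpha \geq0$, positive homogeneity yields $\mathbb{\hat{E}}[\alpha X]=\alpha \mathbb{\hat{E}}[X]$ and $\mathbb{\hat{E}}[-\alpha X]=\alpha \mathbb{\hat{E}}[-X]=-\alpha \mathbb{\hat{E}}[X]$, where the last equality uses $\mathbb{\hat{E}}[-X]=-\mathbb{\hat{E}}[X]$; hence $-\mathbb{\hat{E}}[-\alpha X]=\alpha \mathbb{\hat{E}}[X]=\mathbb{\hat{E}}[\alpha X]$. If $\alpha<0$, write $\alpha=-\beta$ with $\beta>0$; then $\mathbb{\hat{E}}[\alpha X]=\beta \mathbb{\hat{E}}[-X]=-\beta \mathbb{\hat{E}}[X]=\alpha \mathbb{\hat{E}}[X]$ and similarly $\mathbb{\hat{E}}[-\alpha X]=\beta \mathbb{\hat{E}}[X]=-\alpha \mathbb{\hat{E}}[X]$, so again $-\mathbb{\hat{E}}[-\alpha X]=\alpha \mathbb{\hat{E}}[X]=\mathbb{\hat{E}}[\alpha X]$. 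In either case the sandwich collapses to the equality $\mathbb{\tilde{E}}[\alpha X+Y]-\mathbb{\tilde{E}}[Y]=\alpha \mathbb{\hat{E}}[X]$, which is (\ref{e1-2}).

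There is no serious obstacle here; the only point requiring care is the bookkeeping of the sign of $\alpha$, since $\mathbb{\hat{E}}[\cdot]$ is only positively homogeneous and one must invoke the hypothesis $\mathbb{\hat{E}}[X]=-\mathbb{\hat{E}}[-X]$ precisely to pull a negative scalar through. I would present the two-sided bound once and then dispatch the two sign cases in a single short paragraph.
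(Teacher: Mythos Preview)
Your argument is correct. The paper does not supply its own proof of this proposition; it simply quotes it as Proposition~1.3.8 of Peng~\cite{P2019}. Your sandwich via the domination inequality (\ref{e1-1}) together with positive homogeneity and the hypothesis $\mathbb{\hat{E}}[X]=-\mathbb{\hat{E}}[-X]$ is exactly the standard proof one finds there, so there is nothing to compare.
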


For each given $d$-dimensional random vector $X=(X^{1}$,$\ldots$,$X^{d})^{T}$
with $X^{i}\in L^{1}(\Omega)$ for $i\leq d$, the distribution of $X$ under a
nonlinear expectation $\mathbb{\tilde{E}}[\cdot]$ (resp. sublinear expectation
$\mathbb{\hat{E}}[\cdot]$) is defined by%
\[
\mathbb{\tilde{F}}_{X}[\phi]:=\mathbb{\tilde{E}}[\phi(X)]\text{ (resp.
}\mathbb{\hat{F}}_{X}[\phi]:=\mathbb{\hat{E}}[\phi(X)]\text{) for each }%
\phi \in C_{b.Lip}(\mathbb{R}^{d}).
\]
It is easy to check that $\mathbb{\tilde{F}}_{X}[\cdot]$ is a nonlinear
expectation on $(\mathbb{R}^{d},C_{b.Lip}(\mathbb{R}^{d}))$, and
$\mathbb{\hat{F}}_{X}[\cdot]$ is a sublinear expectation on $(\mathbb{R}%
^{d},C_{b.Lip}(\mathbb{R}^{d}))$. Two $d$-dimensional random vectors $X$ and
$Y$ are called identically distributed, denoted by $X\overset{d}{=}Y$, under
$\mathbb{\tilde{E}}[\cdot]$ (resp. $\mathbb{\hat{E}}[\cdot]$) if
$\mathbb{\tilde{F}}_{X}[\cdot]=\mathbb{\tilde{F}}_{Y}[\cdot]$ (resp.
$\mathbb{\hat{F}}_{X}[\cdot]=\mathbb{\hat{F}}_{Y}[\cdot]$). A $d$-dimensional
random vector $Y$ is said to be independent of a $d^{\prime}$-dimensional
random vector random vector $X$ under $\mathbb{\tilde{E}}[\cdot]$ (resp.
$\mathbb{\hat{E}}[\cdot]$) if%
\[
\mathbb{\tilde{E}}[\phi(X,Y)]=\mathbb{\tilde{E}}[\mathbb{\tilde{E}}%
[\phi(x,Y)]_{x=X}]\text{ (resp. }\mathbb{\hat{E}}[\phi(X,Y)]=\mathbb{\hat{E}%
}\left[  \mathbb{\hat{E}}[\phi(x,Y)]_{x=X}\right]  \text{) for each }\phi \in
C_{b.Lip}(\mathbb{R}^{d^{\prime}+d}).
\]

\begin{remark}
In the above definition of distribution, we only need $\phi(X)\in L^{1}%
(\Omega)$ for each $\phi \in C_{b.Lip}(\mathbb{R}^{d})$, while $X^{i}$ may not
in $L^{1}(\Omega)$ for $i\leq d$. Similar requirements for the definition of
independence. If $X^{i}\in L^{1}(\Omega)$ for $i\leq d$, we define
$\mathbb{\tilde{E}}[X]=(\mathbb{\tilde{E}}[X^{1}]$,$\ldots$,$\mathbb{\tilde
{E}}[X^{d}])^{T}$.
\end{remark}

\begin{definition}
\label{de-2}A sequence of $d$-dimensional random vectors $\{X_{i}%
\}_{i=1}^{\infty}$ defined respectively on nonlinear expectation spaces
$(\Omega_{i},\mathcal{H}_{i},\mathbb{\tilde{E}}_{i})$ (resp. sublinear
expectation spaces $(\Omega_{i},\mathcal{H}_{i},\mathbb{\hat{E}}_{i})$).
$\{X_{i}\}_{i=1}^{\infty}$ is said to converge in distribution if
$\mathbb{\tilde{F}}_{X_{i}}[\phi]:=\mathbb{\tilde{E}}_{i}[\phi(X_{i})]$ (resp.
$\mathbb{\hat{F}}_{X_{i}}[\phi]:=\mathbb{\hat{E}}_{i}[\phi(X_{i})]$) converges
as $i\rightarrow \infty$ for each $\phi \in C_{b.Lip}(\mathbb{R}^{d})$.
\end{definition}

\begin{remark}
Define
\[
\mathbb{\tilde{F}}[\phi]=\lim_{i\rightarrow \infty}\mathbb{\tilde{F}}_{X_{i}%
}[\phi]\text{ (resp. }\mathbb{\hat{F}}[\phi]=\lim_{i\rightarrow \infty
}\mathbb{\hat{F}}_{X_{i}}[\phi]\text{) for each }\phi \in C_{b.Lip}%
(\mathbb{R}^{d}).
\]
It is easy to verify that $\mathbb{\tilde{F}}[\cdot]$ (resp. $\mathbb{\hat{F}%
}[\cdot]$) is a nonlinear (resp. sublinear) expectation on $(\mathbb{R}%
^{d},C_{b.Lip}(\mathbb{R}^{d}))$. $\{X_{i}\}_{i=1}^{\infty}$ is said to
converge in distribution to $X$ defined on a nonlinear expectation space
$(\Omega,\mathcal{H},\mathbb{\tilde{E}})$ if $\mathbb{\tilde{F}}_{X}%
[\cdot]=\mathbb{\tilde{F}}[\cdot]$.
\end{remark}

\begin{definition}
\label{de-3}Let $\mathbb{\hat{F}}[\cdot]$ be a sublinear expectation on
$(\mathbb{R}^{d},C_{b.Lip}(\mathbb{R}^{d}))$. $\mathbb{\hat{F}}[\cdot]$ is
called tight if, for any $\varepsilon>0$, there exist $\phi \in C_{b.Lip}%
(\mathbb{R}^{d})$ and $N>0$ satisfying $I_{\{|x|\geq N\}}\leq \phi$ and
$\mathbb{\hat{F}}[\phi]<\varepsilon$.
\end{definition}

\begin{definition}
\label{de-4}Let $\{ \mathbb{\tilde{F}}_{i}[\cdot]\}_{i=1}^{\infty}$ be a
sequence of nonlinear expectations on $(\mathbb{R}^{d},C_{b.Lip}%
(\mathbb{R}^{d}))$. $\{ \mathbb{\tilde{F}}_{i}[\cdot]\}_{i=1}^{\infty}$ is
called tight if there exists a tight sublinear expectation $\mathbb{\hat{F}%
}[\cdot]$ on $(\mathbb{R}^{d},C_{b.Lip}(\mathbb{R}^{d}))$ such that
$\mathbb{\tilde{F}}_{i}[\cdot]$ is dominated by $\mathbb{\hat{F}}[\cdot]$ for
$i\geq1$.
\end{definition}

\begin{lemma}
\label{le-5}Let $\mathbb{\hat{F}}[\cdot]$ be a sublinear expectation on
$(\mathbb{R}^{d},C_{b.Lip}(\mathbb{R}^{d}))$. If $\beta:=\sup \{ \mathbb{\hat
{F}}[|x|\wedge i]:i\geq1\}<\infty$, then $\mathbb{\hat{F}}[\cdot]$ is tight.
\end{lemma}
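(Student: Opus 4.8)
The plan is to prove tightness directly from Definition \ref{de-3} by exhibiting, for each $\varepsilon>0$, an explicit test function of ``truncated ramp'' type. For an integer $m\geq 1$ I would set
\[
g_{m}(x):=\min\Bigl(1,\tfrac{1}{m}(|x|-m)^{+}\Bigr),\qquad x\in\mathbb{R}^{d}.
\]
Since $x\mapsto|x|$ is $1$-Lipschitz and $t\mapsto\min\bigl(1,\tfrac{1}{m}(t-m)^{+}\bigr)$ is $\tfrac{1}{m}$-Lipschitz with values in $[0,1]$, the composition $g_{m}$ belongs to $C_{b.Lip}(\mathbb{R}^{d})$. Moreover $\tfrac{1}{m}(|x|-m)^{+}\geq 1$ as soon as $|x|\geq 2m$, so $g_{m}(x)=1$ there and hence $I_{\{|x|\geq 2m\}}\leq g_{m}$. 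Thus $g_{m}$ is a legitimate candidate for the $\phi$ in the definition of tightness, with threshold $N=2m$, and it remains only to bound $\mathbb{\hat{F}}[g_{m}]$.

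The one point with any content is the pointwise comparison
\[
g_{m}(x)\leq\frac{1}{m}\bigl(|x|\wedge 2m\bigr)\qquad\text{for all }x\in\mathbb{R}^{d},
\]
which I would check by splitting into the cases $|x|\leq m$ (both sides nonnegative, left side zero), $m<|x|\leq 2m$ (there $g_{m}(x)=\tfrac{1}{m}(|x|-m)\leq\tfrac{|x|}{m}=\tfrac{1}{m}(|x|\wedge 2m)$), and $|x|>2m$ (there $g_{m}(x)=1\leq 2=\tfrac{1}{m}(|x|\wedge 2m)$). Granting this, monotonicity and positive homogeneity of the sublinear expectation $\mathbb{\hat{F}}[\cdot]$, together with $|x|\wedge 2m\leq |x|\wedge\lceil 2m\rceil$, give
\[
\mathbb{\hat{F}}[g_{m}]\leq\frac{1}{m}\,\mathbb{\hat{F}}\bigl[|x|\wedge\lceil 2m\rceil\bigr]\leq\frac{\beta}{m}.
\]
Then, given $\varepsilon>0$, I would choose $m\in\mathbb{N}$ with $m>\beta/\varepsilon$ (any $m\geq 1$ if $\beta=0$); setting $\phi:=g_{m}$ and $N:=2m$ yields $I_{\{|x|\geq N\}}\leq\phi$ and $\mathbb{\hat{F}}[\phi]\leq\beta/m<\varepsilon$, which is precisely the tightness condition.

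I do not expect a real obstacle: the crux is simply to have noticed the inequality $g_{m}\leq\tfrac{1}{m}(|x|\wedge 2m)$, which simultaneously keeps the test function bounded and Lipschitz and routes the estimate through exactly the truncations $|x|\wedge i$ on which the hypothesis $\beta<\infty$ is assumed. Everything else — membership of $g_{m}$ and $|x|\wedge\lceil 2m\rceil$ in $C_{b.Lip}(\mathbb{R}^{d})$, the indicator domination with threshold $2m$, and the final choice of $m$ — is a routine verification. (One could also work directly with $\mathbb{\hat{F}}[|x|\wedge 2m]$, but passing to $\lceil 2m\rceil$ makes the appeal to $\beta=\sup_{i\geq 1}\mathbb{\hat{F}}[|x|\wedge i]$ immediate.)
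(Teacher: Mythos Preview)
Your proof is correct and follows essentially the same approach as the paper: build a bounded Lipschitz test function dominating an indicator, then bound it pointwise by a multiple of a truncation $|x|\wedge i$ so that the hypothesis $\beta<\infty$ applies. The only differences are cosmetic---you construct the ramp $g_m$ explicitly with width $m$ and threshold $2m$, whereas the paper asserts existence of some $\phi$ with $I_{\{|x|\geq N\}}\leq\phi\leq I_{\{|x|\geq N-1\}}$ and bounds it by $(N-1)^{-1}(|x|\wedge N)$; also note that since $m$ is an integer, $\lceil 2m\rceil=2m$, so your detour through the ceiling is harmless but unnecessary.
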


\begin{proof}
For any $\varepsilon>0$, we can choose $N>1$ such that $(N-1)^{-1}%
\beta<\varepsilon$. It is clear that there exists a $\phi \in C_{b.Lip}%
(\mathbb{R}^{d})$ satisfying $I_{\{|x|\geq N\}}\leq \phi \leq I_{\{|x|\geq
N-1\}}$. Then%
\[
\mathbb{\hat{F}}[\phi]\leq \mathbb{\hat{F}}[(N-1)^{-1}(|x|\wedge N)]\leq
(N-1)^{-1}\beta<\varepsilon,
\]
which implies that $\mathbb{\hat{F}}[\cdot]$ is tight.
\end{proof}

The following fundamental property is Theorem 9 in \cite{P2010}.

\begin{theorem}
\label{th-6}Let $\{X_{i}\}_{i=1}^{\infty}$ be a sequence of $d$-dimensional
random vectors defined respectively on nonlinear expectation spaces
$(\Omega_{i},\mathcal{H}_{i},\mathbb{\tilde{E}}_{i})$. Suppose that $\{
\mathbb{\tilde{F}}_{X_{i}}[\cdot]\}_{i=1}^{\infty}$ is tight. Then there
exists a subsequence $\{X_{i_{j}}\}_{j=1}^{\infty}$ which converges in distribution.
\end{theorem}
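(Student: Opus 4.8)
The plan is to run a Helly/Prokhorov-type selection argument: first record the rigidity that the domination (\ref{e1-1}) forces on the functionals $\phi\mapsto\mathbb{\tilde{F}}_{X_{i}}[\phi]$, then diagonalize over a carefully chosen \emph{countable} family of test functions, and finally upgrade to convergence against every $\phi\in C_{b.Lip}(\mathbb{R}^{d})$; the distinct underlying spaces $\Omega_{i}$ play no role, since everything happens at the level of the laws $\mathbb{\tilde{F}}_{X_{i}}[\cdot]$ on $(\mathbb{R}^{d},C_{b.Lip}(\mathbb{R}^{d}))$. Let $\mathbb{\hat{F}}[\cdot]$ be the tight sublinear expectation on $(\mathbb{R}^{d},C_{b.Lip}(\mathbb{R}^{d}))$ that dominates all $\mathbb{\tilde{F}}_{X_{i}}[\cdot]$, as furnished by Definition \ref{de-4}. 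Putting $\xi_{2}=0$ in (\ref{e1-1}) and using $\mathbb{\tilde{F}}_{X_{i}}[0]=0$ yields $-\mathbb{\hat{F}}[-\phi]\leq\mathbb{\tilde{F}}_{X_{i}}[\phi]\leq\mathbb{\hat{F}}[\phi]$, so $|\mathbb{\tilde{F}}_{X_{i}}[\phi]|\leq\|\phi\|_{\infty}$; and applying (\ref{e1-1}) to a pair $\phi_{1},\phi_{2}$ gives $|\mathbb{\tilde{F}}_{X_{i}}[\phi_{1}]-\mathbb{\tilde{F}}_{X_{i}}[\phi_{2}]|\leq\mathbb{\hat{F}}[|\phi_{1}-\phi_{2}|]\leq\|\phi_{1}-\phi_{2}\|_{\infty}$. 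Hence every $\mathbb{\tilde{F}}_{X_{i}}[\cdot]$ is, uniformly in $i$, bounded by $\|\cdot\|_{\infty}$ and $1$-Lipschitz for the sup-norm.

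The first substantive step is a tightness-driven truncation. Fix $\phi$ with $\|\phi\|_{\infty}\leq M_{\phi}$ and Lipschitz constant $L_{\phi}$, and fix $\varepsilon>0$; by tightness of $\mathbb{\hat{F}}[\cdot]$ (Definition \ref{de-3}) choose $\psi\in C_{b.Lip}(\mathbb{R}^{d})$ and $N\in\mathbb{N}$ with $I_{\{|x|\geq N\}}\leq\psi$ and $\mathbb{\hat{F}}[\psi]<\varepsilon$. With the cut-off $\chi(x):=(N+1-|x|)^{+}\wedge1\in C_{b.Lip}(\mathbb{R}^{d})$ one checks $0\leq1-\chi\leq\psi$ pointwise, so $|\phi-\phi\chi|\leq M_{\phi}\psi$, and by (\ref{e1-1}) together with positive homogeneity and monotonicity of $\mathbb{\hat{F}}[\cdot]$,
\[
\sup_{i\geq1}\big|\mathbb{\tilde{F}}_{X_{i}}[\phi]-\mathbb{\tilde{F}}_{X_{i}}[\phi\chi]\big|\leq M_{\phi}\,\mathbb{\hat{F}}[\psi]<M_{\phi}\,\varepsilon,
\]
where $\phi\chi$ is supported in $\{|x|\leq N+1\}$, bounded by $M_{\phi}$, and $(L_{\phi}+M_{\phi})$-Lipschitz. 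Next I would build the countable family: for $N,M,L\in\mathbb{N}$ let $\mathcal{K}_{N,M,L}$ be the set of $\varphi\in C_{b.Lip}(\mathbb{R}^{d})$ supported in $\{|x|\leq N\}$ with $\|\varphi\|_{\infty}\leq M$ and Lipschitz constant $\leq L$; the elements of $\mathcal{K}_{N,M,L}$ are uniformly bounded, equicontinuous and supported in one fixed compact set, and these properties pass to uniform limits, so by the Arzel\`a--Ascoli theorem $\mathcal{K}_{N,M,L}$ is compact, hence separable, in $(C_{b}(\mathbb{R}^{d}),\|\cdot\|_{\infty})$. Pick a countable dense subset $\mathcal{D}_{N,M,L}\subseteq\mathcal{K}_{N,M,L}$ and set $\mathcal{D}_{0}:=\bigcup_{N,M,L\in\mathbb{N}}\mathcal{D}_{N,M,L}$, a countable subset of $C_{b.Lip}(\mathbb{R}^{d})$.

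Since $(\mathbb{\tilde{F}}_{X_{i}}[\varphi])_{i\geq1}$ is a bounded real sequence for each $\varphi\in\mathcal{D}_{0}$, the usual diagonal procedure over the countable set $\mathcal{D}_{0}$ produces a subsequence $\{X_{i_{j}}\}_{j\geq1}$ along which $\lim_{j}\mathbb{\tilde{F}}_{X_{i_{j}}}[\varphi]$ exists for every $\varphi\in\mathcal{D}_{0}$. To conclude, take any $\phi\in C_{b.Lip}(\mathbb{R}^{d})$ and $\varepsilon>0$; apply the truncation step (with $\varepsilon/M_{\phi}$ in place of $\varepsilon$) to obtain $\chi$ with $\sup_{i}|\mathbb{\tilde{F}}_{X_{i}}[\phi]-\mathbb{\tilde{F}}_{X_{i}}[\phi\chi]|<\varepsilon$, then use $\phi\chi\in\mathcal{K}_{N+1,M,L}$ for suitable $M,L\in\mathbb{N}$ to pick $\varphi\in\mathcal{D}_{N+1,M,L}$ with $\|\phi\chi-\varphi\|_{\infty}<\varepsilon$, whence $\sup_{i}|\mathbb{\tilde{F}}_{X_{i}}[\phi\chi]-\mathbb{\tilde{F}}_{X_{i}}[\varphi]|<\varepsilon$ by the uniform $1$-Lipschitz property. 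A triangle inequality then gives, for all $j,j'$,
\[
\big|\mathbb{\tilde{F}}_{X_{i_{j}}}[\phi]-\mathbb{\tilde{F}}_{X_{i_{j'}}}[\phi]\big|\leq4\varepsilon+\big|\mathbb{\tilde{F}}_{X_{i_{j}}}[\varphi]-\mathbb{\tilde{F}}_{X_{i_{j'}}}[\varphi]\big|,
\]
and letting $j,j'\to\infty$ and then $\varepsilon\downarrow0$ shows $(\mathbb{\tilde{F}}_{X_{i_{j}}}[\phi])_{j}$ is Cauchy, hence convergent, which by Definition \ref{de-2} means $\{X_{i_{j}}\}_{j\geq1}$ converges in distribution.

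The main obstacle is the construction of $\mathcal{D}_{0}$: $C_{b.Lip}(\mathbb{R}^{d})$ is \emph{not} separable for the sup-norm, so one cannot simply diagonalize over a dense subset of it. The resolution — and essentially the only place the hypotheses enter — is that tightness confines the relevant behaviour to compact balls, and on a fixed ball imposing both a fixed bound \emph{and} a fixed Lipschitz constant restores sup-norm compactness via Arzel\`a--Ascoli; everything else is a soft $\varepsilon/3$-type estimate carried by the uniform $1$-Lipschitz bound that (\ref{e1-1}) provides for free.
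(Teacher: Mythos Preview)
The paper does not prove Theorem \ref{th-6}; it simply quotes it as Theorem 9 of \cite{P2010} without reproducing the argument, so there is no in-paper proof to compare against directly.

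Your argument is correct and self-contained. The key steps are all sound: the domination (\ref{e1-1}) gives the uniform estimate $|\mathbb{\tilde{F}}_{X_i}[\phi_1]-\mathbb{\tilde{F}}_{X_i}[\phi_2]|\leq\mathbb{\hat{F}}[|\phi_1-\phi_2|]\leq\|\phi_1-\phi_2\|_\infty$; tightness lets you replace any $\phi$ by the compactly supported $\phi\chi$ at a cost $M_\phi\mathbb{\hat{F}}[\psi]$ that is uniform in $i$; each class $\mathcal{K}_{N,M,L}$ is indeed closed (the constraints on support, sup-norm and Lipschitz constant all pass to uniform limits) and, by Arzel\`a--Ascoli on the compact ball $\{|x|\leq N\}$ together with vanishing outside it, compact in $(C_b(\mathbb{R}^d),\|\cdot\|_\infty)$, hence separable; the diagonal extraction over the countable set $\mathcal{D}_0$ and the final $4\varepsilon$ Cauchy estimate are then routine. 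Your identification of the genuine obstacle --- that $C_{b.Lip}(\mathbb{R}^d)$ is not sup-norm separable, so one must first use tightness to land in a compact class before diagonalizing --- is exactly the point of the argument. This Helly/Prokhorov-style selection is also the route taken in \cite{P2010}, so your proof is in line with the cited source.
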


The following property is useful in proving the limit theorems under nonlinear expectations.

\begin{proposition}
\label{pro-7}Let $\{X_{i}=(X_{i,1},\ldots,X_{i,m})\}_{i=1}^{\infty}$ be a
sequence of $d\times m$-dimensional random vectors defined respectively on
nonlinear expectation spaces $(\Omega_{i},\mathcal{H}_{i},\mathbb{\tilde{E}%
}_{i})$ such that $X_{i,k}$ is independent of $(X_{i,1},\ldots,X_{i,k-1})$ for
$k=2$,$\ldots$,$m$. Let $Y=(Y_{1},\ldots,Y_{m})$ be a $d\times m$-dimensional
random vector defined on a nonlinear expectation spaces $(\Omega
,\mathcal{H},\mathbb{\tilde{E}})$ such that $Y_{k}$ is independent of
$(Y_{1},\ldots,Y_{k-1})$ for $k=2$,$\ldots$,$m$. Suppose that $\{X_{i,k}%
\}_{i=1}^{\infty}$ converges in distribution to $Y_{k}$ and $\{ \mathbb{\tilde
{F}}_{X_{i,k}}[\cdot]\}_{i=1}^{\infty}$ is tight for $k=1$,$\ldots$,$m$. Then
$\{X_{i}\}_{i=1}^{\infty}$ converges in distribution to $Y$.
\end{proposition}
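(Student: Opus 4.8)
The plan is to prove the statement by induction on $m$, strengthening the assertion so as to carry tightness along the way. Write $X_i^{(k)}:=(X_{i,1},\ldots,X_{i,k})$ and $Y^{(k)}:=(Y_1,\ldots,Y_k)$, and let $P(m)$ be the claim: under the stated hypotheses, $\{X_i^{(m)}\}_i$ converges in distribution to $Y^{(m)}$ \emph{and} $\{\mathbb{\tilde{F}}_{X_i^{(m)}}[\cdot]\}_i$ is tight. The base case $P(1)$ is exactly the two hypotheses for $k=1$. For the inductive step $P(m-1)\Rightarrow P(m)$, the key preliminary is that, by the independence assumptions and the definition of distribution, for every $\phi\in C_{b.Lip}(\mathbb{R}^{dm})$ one has $\mathbb{\tilde{F}}_{X_i^{(m)}}[\phi]=\mathbb{\tilde{F}}_{X_i^{(m-1)}}[\psi_i^{\phi}]$ and $\mathbb{\tilde{F}}_{Y^{(m)}}[\phi]=\mathbb{\tilde{F}}_{Y^{(m-1)}}[\psi^{\phi}]$, where $\psi_i^{\phi}(x):=\mathbb{\tilde{F}}_{X_{i,m}}[\phi(x,\cdot)]$ and $\psi^{\phi}(x):=\mathbb{\tilde{F}}_{Y_m}[\phi(x,\cdot)]$, with $\mathbb{\tilde{F}}_{Y_m}[\cdot]=\lim_i\mathbb{\tilde{F}}_{X_{i,m}}[\cdot]$. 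Fixing the tight sublinear expectation $\mathbb{\hat{F}}_m[\cdot]$ that dominates every $\mathbb{\tilde{F}}_{X_{i,m}}[\cdot]$, one checks that each $\psi_i^{\phi}$ lies in $C_{b.Lip}(\mathbb{R}^{d(m-1)})$ with sup-norm and Lipschitz constant bounded in terms of $\phi$ only, that $\psi_i^{\phi}\to\psi^{\phi}$ pointwise (since $\{X_{i,m}\}$ converges in distribution to $Y_m$), and hence that $\psi^{\phi}$ also lies in $C_{b.Lip}(\mathbb{R}^{d(m-1)})$ with the same bounds and $\psi_i^{\phi}\to\psi^{\phi}$ uniformly on every compact set (pointwise convergence of a uniformly equicontinuous family).

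To obtain tightness of $\{\mathbb{\tilde{F}}_{X_i^{(m)}}[\cdot]\}_i$, take (by $P(m-1)$) a tight sublinear expectation $\mathbb{\hat{G}}[\cdot]$ on $\mathbb{R}^{d(m-1)}$ dominating every $\mathbb{\tilde{F}}_{X_i^{(m-1)}}[\cdot]$, and set $\mathbb{\hat{H}}[\phi]:=\mathbb{\hat{G}}\big[\,x\mapsto\mathbb{\hat{F}}_m[\phi(x,\cdot)]\,\big]$ on $\mathbb{R}^{dm}$. Then $\mathbb{\hat{H}}[\cdot]$ is a sublinear expectation (a composition of sublinear expectations, ``adjoining an independent last block''); it dominates every $\mathbb{\tilde{F}}_{X_i^{(m)}}[\cdot]$, since $\psi_i^{\phi}-\psi_i^{\phi'}\leq\big(x\mapsto\mathbb{\hat{F}}_m[(\phi-\phi')(x,\cdot)]\big)$ pointwise and $\mathbb{\tilde{F}}_{X_i^{(m-1)}}[\cdot]$ is dominated by $\mathbb{\hat{G}}[\cdot]$; and it is tight, because given $\varepsilon>0$ one chooses cut-offs $\rho\in C_{b.Lip}(\mathbb{R}^{d(m-1)})$ for $\mathbb{\hat{G}}[\cdot]$ and $\sigma\in C_{b.Lip}(\mathbb{R}^d)$ for $\mathbb{\hat{F}}_m[\cdot]$ with $\mathbb{\hat{G}}[\rho],\mathbb{\hat{F}}_m[\sigma]<\varepsilon/2$, and then $\Theta(x,y):=\min(1,\rho(x)+\sigma(y))$ satisfies $I_{\{|(x,y)|\geq N\}}\leq\Theta$ for a suitable $N$ while $\mathbb{\hat{H}}[\Theta]\leq\mathbb{\hat{G}}[\rho]+\mathbb{\hat{F}}_m[\sigma]<\varepsilon$ (using translation invariance of $\mathbb{\hat{F}}_m[\cdot]$ and $\mathbb{\hat{G}}[\cdot]$). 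Thus $\{\mathbb{\tilde{F}}_{X_i^{(m)}}[\cdot]\}_i$ is tight.

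For the convergence in distribution, decompose, for $\phi\in C_{b.Lip}(\mathbb{R}^{dm})$,
\[
\mathbb{\tilde{F}}_{X_i^{(m)}}[\phi]-\mathbb{\tilde{F}}_{Y^{(m)}}[\phi]=\big(\mathbb{\tilde{F}}_{X_i^{(m-1)}}[\psi_i^{\phi}]-\mathbb{\tilde{F}}_{X_i^{(m-1)}}[\psi^{\phi}]\big)+\big(\mathbb{\tilde{F}}_{X_i^{(m-1)}}[\psi^{\phi}]-\mathbb{\tilde{F}}_{Y^{(m-1)}}[\psi^{\phi}]\big).
\]
The second bracket tends to $0$ by the convergence part of $P(m-1)$ (whose hypotheses are inherited from those for $m$), applied to the test function $\psi^{\phi}\in C_{b.Lip}(\mathbb{R}^{d(m-1)})$. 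For the first bracket, domination by $\mathbb{\hat{G}}[\cdot]$ bounds its absolute value by $\mathbb{\hat{G}}[|\psi_i^{\phi}-\psi^{\phi}|]$; bounding $|\psi_i^{\phi}-\psi^{\phi}|\leq\sup_{|x|\leq N}|\psi_i^{\phi}(x)-\psi^{\phi}(x)|+2\|\phi\|_{\infty}\rho$ with $\rho$ a cut-off for $\mathbb{\hat{G}}[\cdot]$ at a large radius $N$, and combining the locally uniform convergence $\psi_i^{\phi}\to\psi^{\phi}$ with the tightness of $\mathbb{\hat{G}}[\cdot]$, shows that this too tends to $0$. Hence $P(m)$ holds and the induction is complete.

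The step I expect to be the crux is the first bracket in the last display. A general nonlinear expectation need not be continuous for the supremum norm, so one cannot directly compare $\mathbb{\tilde{F}}_{X_i^{(m)}}[\phi]$ with $\mathbb{\tilde{F}}_{X_i^{(m)}}[\phi']$ for sup-norm-close $\phi,\phi'$; what rescues the argument is peeling off the last coordinate via independence so that the surviving operator is the \emph{sublinear} (hence sup-norm-contractive, translation-invariant, tight) expectation $\mathbb{\hat{G}}[\cdot]$, and then upgrading pointwise convergence of $\psi_i^{\phi}$ to locally uniform convergence via the uniform Lipschitz bound. This is precisely what forces tightness to be carried through the induction, so the genuinely delicate ingredient is the construction of the iterated dominating expectation $\mathbb{\hat{H}}[\cdot]$ together with the verification that tightness is preserved under ``adjoining an independent block''.
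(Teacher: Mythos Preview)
Your argument is correct and follows the same route as the paper's proof: peel off the last coordinate via independence, pass to the inner test functions $\psi_i^{\phi}$, use the dominating sublinear expectation to get uniform $C_{b.Lip}$ bounds and upgrade pointwise to locally uniform convergence, and then combine tightness with the convergence of the truncated vector. The paper only writes out $m=2$ and dismisses $m>2$ as ``similar''; your explicit induction, in particular the strengthening that carries tightness of the joint $\{\mathbb{\tilde{F}}_{X_i^{(m-1)}}\}$ through via the product-type dominating expectation $\mathbb{\hat{H}}$, is exactly the detail one needs to make that ``similar'' honest, since the first bracket in your decomposition requires a tight sublinear expectation on $\mathbb{R}^{d(m-1)}$, not just on each factor.
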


\begin{proof}
We only prove the case $m=2$. The proof for $m>2$ is similar. For each given
$\phi \in C_{b.Lip}(\mathbb{R}^{2d})$, define%
\[
\psi_{i}(x)=\mathbb{\tilde{E}}_{i}[\phi(x,X_{i,2})]\text{ and }\psi
(x)=\mathbb{\tilde{E}}[\phi(x,Y_{2})]\text{ for }x\in \mathbb{R}^{d}.
\]
Since $\{X_{i,2}\}_{i=1}^{\infty}$ converges in distribution to $Y_{2}$, we
have
\begin{equation}
\psi_{i}(x)\rightarrow \psi(x)\text{ as }i\rightarrow \infty \text{ for each
}x\in \mathbb{R}^{d}. \label{e1-4}%
\end{equation}
Note that $\{ \mathbb{\tilde{F}}_{X_{i,2}}[\cdot]\}_{i=1}^{\infty}$ and $\{
\mathbb{\tilde{F}}_{X_{i,1}}[\cdot]\}_{i=1}^{\infty}$ are tight. Then there
exists a tight sublinear expectation $\mathbb{\hat{F}}[\cdot]$ on
$(\mathbb{R}^{d},C_{b.Lip}(\mathbb{R}^{d}))$ such that $\mathbb{\tilde{F}%
}_{X_{i,2}}[\cdot]$ and $\mathbb{\tilde{F}}_{X_{i,1}}[\cdot]$ are dominated by
$\mathbb{\hat{F}}[\cdot]$ for $i\geq1$. Thus we get%
\begin{equation}
|\psi_{i}(x)|=|\mathbb{\tilde{F}}_{X_{i,2}}[\phi(x,\cdot)]|\leq \mathbb{\hat
{F}}[|\phi(x,\cdot)|]\leq||\phi||_{\infty}, \label{e1-5}%
\end{equation}%
\begin{equation}
|\psi_{i}(x)-\psi_{i}(x^{\prime})|=|\mathbb{\tilde{F}}_{X_{i,2}}[\phi
(x,\cdot)]-\mathbb{\tilde{F}}_{X_{i,2}}[\phi(x^{\prime},\cdot)]|\leq
\mathbb{\hat{F}}[|\phi(x,\cdot)-\phi(x^{\prime},\cdot)|]\leq L_{\phi
}|x-x^{\prime}| \label{e1-3}%
\end{equation}
for $x$, $x^{\prime}\in \mathbb{R}^{d}$ and $i\geq1$, where $||\phi||_{\infty
}:=\sup \{|\phi(x_{1},x_{2})|:x_{1}$, $x_{2}\in \mathbb{R}^{d}\}$ and $L_{\phi}$
is the Lipschitz constant of $\phi$. For any $\varepsilon>0$, there exist
$\varphi \in C_{b.Lip}(\mathbb{R}^{d})$ and $N>0$ satisfying $I_{\{|x|\geq
N\}}\leq \varphi$ and $\mathbb{\hat{F}}[\varphi]<\varepsilon$. It follows from
(\ref{e1-4}) and (\ref{e1-3}) that%
\begin{equation}
\gamma_{i}^{N}:=\sup_{|x|\leq N}|\psi_{i}(x)-\psi(x)|\rightarrow0\text{ as
}i\rightarrow \infty. \label{e1-6}%
\end{equation}
By (\ref{e1-5}) and (\ref{e1-6}), we obtain%
\[
|\mathbb{\tilde{E}}_{i}[\psi_{i}(X_{i,1})]-\mathbb{\tilde{E}}_{i}[\psi
(X_{i,1})]|\leq \mathbb{\hat{F}}[|\psi_{i}(\cdot)-\psi(\cdot)|]\leq
\mathbb{\hat{F}}[\gamma_{i}^{N}+2||\phi||_{\infty}\varphi]\leq \gamma_{i}%
^{N}+2||\phi||_{\infty}\varepsilon,
\]
which implies $|\mathbb{\tilde{E}}_{i}[\psi_{i}(X_{i,1})]-\mathbb{\tilde{E}%
}_{i}[\psi(X_{i,1})]|\rightarrow0$ as $i\rightarrow \infty$ by taking
$i\rightarrow \infty$ and then $\varepsilon \downarrow0$. Thus%
\[
\lim_{i\rightarrow \infty}\mathbb{\tilde{E}}_{i}[\phi(X_{i,1},X_{i,2}%
)]=\lim_{i\rightarrow \infty}\mathbb{\tilde{E}}_{i}[\psi_{i}(X_{i,1}%
)]=\lim_{i\rightarrow \infty}\mathbb{\tilde{E}}_{i}[\psi(X_{i,1}%
)]=\mathbb{\tilde{E}}[\psi(Y_{1})]=\mathbb{\tilde{E}}[\phi(Y_{1},Y_{2})],
\]
which implies the desired result.
\end{proof}

\section{Limit theorems under nonlinear expectations}

\begin{theorem}
\label{th-8}Let a nonlinear expectation $\mathbb{\tilde{E}}[\cdot]$ be
dominated by a sublinear expectation $\mathbb{\hat{E}}[\cdot]$ on
$(\Omega,\mathcal{H})$, and let $\{(X_{n,i},Y_{n,i}):n\in \mathbb{N}$,
$i=1$,$\ldots$,$n\}$ be a sequence of $2d$-dimensional random vectors such
that $(X_{n,i},Y_{n,i})\overset{d}{=}(X_{n,1},Y_{n,1})$ and $(X_{n,i}%
,Y_{n,i})$ is independent of $(X_{n,1}$,$\ldots$,$X_{n,i-1}$,$Y_{n,1}$%
,$\ldots$,$Y_{n,i-1})$ for $n\in \mathbb{N}$, $i=2$,$\ldots$,$n$ under
$\mathbb{\tilde{E}}[\cdot]$ and $\mathbb{\hat{E}}[\cdot]$ respectively.
Suppose that%
\begin{equation}
\mathbb{\hat{E}}[X_{n,1}]=\mathbb{\hat{E}}[-X_{n,1}]=0\text{ for }%
n\in \mathbb{N},\text{ }\lim_{N\rightarrow \infty}\sup_{n\geq1}\mathbb{\hat{E}%
}\left[  (n|X_{n,1}|^{2}-N)^{+}+(n|Y_{n,1}|-N)^{+}\right]  =0, \label{e2-00}%
\end{equation}
and for each given $(A,p)\in \mathbb{S}_{d}\times \mathbb{R}^{d}$,
\begin{equation}
\tilde{G}(A,p):=\lim_{n\rightarrow \infty}n\mathbb{\tilde{E}}\left[  \frac
{1}{2}\langle AX_{n,1},X_{n,1}\rangle+\langle p,Y_{n,1}\rangle \right]  \text{
exists,} \label{e2-01}%
\end{equation}
where $\mathbb{S}_{d}$ denotes the space of $d\times d$ symmetric matrices.
Then for each $\phi \in C_{b.Lip}(\mathbb{R}^{2d})$, we have%
\begin{equation}
\lim_{n\rightarrow \infty}\mathbb{\tilde{E}}\left[  \phi \left(  \sum_{i=1}%
^{n}X_{n,i},\sum_{i=1}^{n}Y_{n,i}\right)  \right]  =u^{\phi}(1,0,0),
\label{e2-1}%
\end{equation}
where $u^{\phi}$ is the unique viscosity solution of the following PDE:%
\begin{equation}
\partial_{t}u-\tilde{G}(D_{x}^{2}u,D_{y}u)=0\text{, }u(0,x,y)=\phi(x,y).
\label{e2-0}%
\end{equation}

\end{theorem}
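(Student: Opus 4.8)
The plan is to run Peng's tightness scheme while replacing the $(2+\gamma)$-moment bound (\ref{e0-4}) everywhere by the uniform integrability (\ref{e2-00}). Write $S_n^X=\sum_{i=1}^n X_{n,i}$, $S_n^Y=\sum_{i=1}^n Y_{n,i}$, and for $g\in C_{b.Lip}(\mathbb{R}^{2d})$ put $(\mathcal{T}_n g)(x,y):=\mathbb{\tilde{E}}[g(x+X_{n,1},y+Y_{n,1})]$; the row-wise i.i.d.\ hypothesis under $\mathbb{\tilde{E}}[\cdot]$ gives $(\mathcal{T}_n^{\,k}g)(x,y)=\mathbb{\tilde{E}}[g(x+\sum_{i\le k}X_{n,i},y+\sum_{i\le k}Y_{n,i})]$, and domination makes $\mathcal{T}_n$ a $\|\cdot\|_\infty$-contraction on $C_{b.Lip}(\mathbb{R}^{2d})$ commuting with addition of constants. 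From (\ref{e2-00}) one records $\sup_n n\mathbb{\hat{E}}[|X_{n,1}|^2]=:C<\infty$ and $\sup_n n\mathbb{\hat{E}}[|Y_{n,1}|]=:C'<\infty$ (fix $N$ in (\ref{e2-00}) and add it back). The scheme is: (i) prove tightness, so that $\mathbb{\tilde{E}}[\phi(\sum_iX_{n,i},\sum_iY_{n,i})]=(\mathcal{T}_n^{\,n}\phi)(0,0)$ has convergent subsequences; (ii) show every subsequential limit equals $u^\phi(1,0,0)$; (iii) deduce (\ref{e2-1}).

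\textbf{The good property, estimate (\ref{e2-4}), tightness and uniform integrability.} The key structural fact is: for any $V_{j-1}$ that is a function of $X_{n,1},\dots,X_{n,j-1}$, the variable $\sum_j\langle V_{j-1},X_{n,j}\rangle$ is mean-certain with $\mathbb{\hat{E}}$-value $0$ — conditioning on the first $j-1$ vectors and using $\mathbb{\hat{E}}[X_{n,j}]=\mathbb{\hat{E}}[-X_{n,j}]=0$ componentwise annihilates each increment, and one telescopes. Hence $\mathbb{\hat{E}}[|S_n^X|^2]=\mathbb{\hat{E}}[\sum_i|X_{n,i}|^2]\le n\mathbb{\hat{E}}[|X_{n,1}|^2]\le C$, and, writing $\sum_{i<j}\langle X_{n,i},X_{n,j}\rangle=\sum_{j=2}^n\langle T_{j-1},X_{n,j}\rangle$ with $T_{j-1}=\sum_{i<j}X_{n,i}$ and discarding the mean-certain cross term of its square,
\[
\mathbb{\hat{E}}\Big[\Big(\sum\nolimits_{i<j}\langle X_{n,i},X_{n,j}\rangle\Big)^{2}\Big]=\mathbb{\hat{E}}\Big[\sum\nolimits_{j}\langle T_{j-1},X_{n,j}\rangle^{2}\Big]\le\sum_{j=2}^{n}\mathbb{\hat{E}}[|T_{j-1}|^{2}]\,\mathbb{\hat{E}}[|X_{n,1}|^{2}]\le\tfrac{n(n-1)}{2}\big(\mathbb{\hat{E}}[|X_{n,1}|^{2}]\big)^{2}\le\tfrac{C^{2}}{2},
\]
uniformly in $n$ (the same bound holds with only the first $r\le n$ summands); this is estimate (\ref{e2-4}). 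Since $\mathbb{\hat{E}}[|S_n^X|^2]\le C$ and $\mathbb{\hat{E}}[|S_n^Y|]\le n\mathbb{\hat{E}}[|Y_{n,1}|]\le C'$, the sublinear expectation $\mathbb{\hat{F}}[\psi]:=\sup_n\mathbb{\hat{E}}[\psi(S_n^X,S_n^Y)]$ satisfies the hypothesis of Lemma \ref{le-5} and dominates every $\mathbb{\tilde{F}}_{(S_n^X,S_n^Y)}$, so $\{\mathbb{\tilde{F}}_{(S_n^X,S_n^Y)}\}$ is tight and Theorem \ref{th-6} (with Proposition \ref{pro-7} for joint convergence of blocks) yields convergent subsequences. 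For the uniform-integrability condition (\ref{e2-10}) I split $|S_n^X|^2=\sum_i|X_{n,i}|^2+2\sum_{i<j}\langle X_{n,i},X_{n,j}\rangle$: the cross sum is uniformly bounded in $L^2$ by (\ref{e2-4}), hence $\mathbb{\hat{E}}[(|\cdot|-N)^+]\to0$ for it; for $\sum_i|X_{n,i}|^2$ one truncates each summand at a fixed level $M$, bounds the discarded part by $n\mathbb{\hat{E}}[(|X_{n,1}|^2-M)^+]\le\sup_n\mathbb{\hat{E}}[(n|X_{n,1}|^2-M)^+]\to0$ by (\ref{e2-00}), and bounds the second moment of the truncated part by $MC+C^2$ via independence, giving $\sup_n\mathbb{\hat{E}}[(\sum_i|X_{n,i}|^2-N)^+]\to0$; the easier truncation for $\sum_i|Y_{n,i}|\ge|S_n^Y|$ gives $\sup_n\mathbb{\hat{E}}[(|S_n^Y|-N)^+]\to0$. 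These bounds (valid for partial sums over any block, uniformly) pass to the weak limit and yield (\ref{e2-10}).

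\textbf{Identifying the limit.} Set $u_n(t,x,y):=(\mathcal{T}_n^{\,\lfloor nt\rfloor}\phi)(x,y)$ on $[0,1]\times\mathbb{R}^{2d}$. These are bounded by $\|\phi\|_\infty$, $L_\phi$-Lipschitz in $(x,y)$ (domination), and, using the bounds above, equicontinuous in $t$ with modulus $L_\phi\big((C(t-s+\tfrac1n))^{1/2}+C'(t-s+\tfrac1n)\big)$; so along any subsequence Arzel\`a--Ascoli extracts a further subsequence $u_{n_k}\to u$ locally uniformly, with $u$ bounded, Lipschitz in $(x,y)$ and $u(0,\cdot)=\phi$. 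Following the method of \cite{P2010} (compare \cite{Hu}) one checks that $u$ is a viscosity solution of (\ref{e2-0}): testing against a smooth $\psi$ touching $u$ from above (resp.\ below) at $(\bar t,\bar x,\bar y)$, $\bar t>0$ (after a standard modification of $\psi$ far from that point), the dynamic programming identity $u_{n_k}(t,\cdot)=\mathcal{T}_{n_k}^{\,r}u_{n_k}(t-\tfrac{r}{n_k},\cdot)$ with $r=\lfloor n_k\delta\rfloor$ together with a Taylor expansion of $\psi$ reduces matters to three pieces — the first-order term $\langle D_x\psi,\sum_{i\le r}X_{n_k,i}\rangle$, which drops by Proposition \ref{pro-1} (mean-certainty); the quadratic cross term $\sum_{i<j}\langle D_x^2\psi\,X_{n_k,i},X_{n_k,j}\rangle$, which is mean-certain and drops; and the i.i.d.\ sum, for which constants factor through $\mathbb{\tilde{E}}[\cdot]$ so that $\mathbb{\tilde{E}}[\sum_{i\le r}(\tfrac12\langle D_x^2\psi\,X_{n_k,i},X_{n_k,i}\rangle+\langle D_y\psi,Y_{n_k,i}\rangle)]=r\,\mathbb{\tilde{E}}[\tfrac12\langle D_x^2\psi\,X_{n_k,1},X_{n_k,1}\rangle+\langle D_y\psi,Y_{n_k,1}\rangle]\to\delta\,\tilde{G}(D_x^2\psi,D_y\psi)$ by (\ref{e2-01}) — while the Taylor remainder is shown to be $o(\delta)$ uniformly in $k$ by truncating $X_{n_k,i}$ at level $M/\sqrt{n_k}$ and $Y_{n_k,i}$ at level $M/n_k$ and invoking (\ref{e2-00}), (\ref{e2-4}) and the block form of (\ref{e2-10}). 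Dividing by $\delta$ and letting $\delta\downarrow0$ gives the two viscosity inequalities; since $\tilde{G}$ is degenerate elliptic (monotonicity of $\mathbb{\tilde{E}}$) and Lipschitz (domination applied to (\ref{e2-01})), (\ref{e2-0}) has a comparison principle, forcing $u=u^\phi$. Hence $\mathbb{\tilde{E}}[\phi(S_{n_k}^X,S_{n_k}^Y)]=u_{n_k}(1,0,0)\to u^\phi(1,0,0)$; as every subsequence has such a sub-subsequence, (\ref{e2-1}) follows.

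\textbf{Main obstacle.} The crux is making the uniform integrability (\ref{e2-00}) carry the weight of the $(2+\gamma)$-moment hypothesis (\ref{e0-4}): proving estimate (\ref{e2-4}) and, through it, the tightness of $\{(S_n^X,S_n^Y)\}$ and the uniform integrability (\ref{e2-10}) of the limit, and then using precisely these ingredients to make the Taylor remainders in the viscosity argument vanish after truncation. Everything rests on the good property — the mean-certainty of $\sum_{i<j}\langle X_{n,i},X_{n,j}\rangle$ and of the higher cross sums produced when it is squared — which is what allows the $L^2$-bound (\ref{e2-4}) and hence control of the $\mathbb{\hat{E}}$-tails of $|S_n^X|^2$ with no moment beyond the second.
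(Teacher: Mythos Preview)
Your proposal is correct and shares the paper's essential mechanism: exploit the mean-certainty of the martingale-type cross sums $\sum_{i<j}\langle X_{n,i},X_{n,j}\rangle$ (and of the cross terms arising when that sum is squared) to obtain a uniform $L^2(\mathbb{\hat{E}})$ bound, deduce tightness and the uniform integrability needed for (\ref{e2-10}) from (\ref{e2-00}) alone, and then identify every subsequential limit with the unique viscosity solution of (\ref{e2-0}) via a Taylor expansion in which the first-order and quadratic cross terms are peeled off by Proposition~\ref{pro-1} and the remaining i.i.d.\ sum is evaluated by (\ref{e2-01}).

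Where you diverge from the paper is in how the limit object is built. The paper constructs, by a diagonal subsequence argument over dyadic times, a canonical path-space nonlinear expectation $\mathbb{\bar{E}}$ (with dominating sublinear $\mathbb{E}^{\ast}$) on $(\mathbb{R}^{2d})^{[0,1]}$, sets $u(t,x,y)=\mathbb{\bar{E}}[\phi(x+\tilde{B}_t-\tilde{B}_0,\,y+\bar{B}_t-\bar{B}_0)]$, and verifies the viscosity inequalities for this $u$ via the exact dynamic programming relation (\ref{e2-17}); the key moment bounds then become statements (\ref{e2-10})--(\ref{e2-13}) about the limit process. You instead work with the discrete semigroups $u_n(t,\cdot)=\mathcal{T}_n^{\lfloor nt\rfloor}\phi$, extract locally uniform limits by Arzel\`a--Ascoli, and run a Barles--Souganidis-type consistency argument at the prelimit level. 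Both routes are sound; yours is more elementary (no auxiliary process, no iterated $\mathcal{\bar H}_m$ construction), while the paper's yields a reusable limit object with independent, stationary increments. Two minor remarks: what you call ``estimate (\ref{e2-4})'' is only the $L^2$ bound on the cross sum, whereas the paper's (\ref{e2-4}) is already the full tail inequality for $(|S_{n,k}^X|^2-N)^+$, obtained in one stroke via $(\sum_i|X_{n,i}|^2-N/2)^+\le\sum_i(|X_{n,i}|^2-N/(2k))^+$ together with Chebyshev on the cross part---this is cleaner than your two-parameter truncation of the diagonal. And in your viscosity step, passing from a test function touching $u$ to an inequality for $u_{n_k}$ needs the standard perturbed-maximum localization, which you only allude to; the paper avoids this because its $u$ already satisfies the exact semigroup identity (\ref{e2-17}) under $\mathbb{\bar{E}}$.
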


In order to prove the above theorem, we need the following lemmas.

\begin{lemma}
\label{le-9}Let $(\Omega,\mathcal{H},\mathbb{\hat{E}})$ be a sublinear
expectation space. Suppose that $\eta \in L^{1}(\Omega)$ is independent of
$\xi \in L^{1}(\Omega)$ and $\mathbb{\hat{E}}\left[  (|\xi|-N)^{+}%
+(|\eta|-N)^{+}\right]  \rightarrow0$ as $N\rightarrow \infty$. Then $\xi
\eta \in L^{1}(\Omega)$ and $\mathbb{\hat{E}}\left[  \xi \eta \right]
=\mathbb{\hat{E}}\left[  \mathbb{\hat{E}}\left[  x\eta \right]  _{x=\xi
}\right]  $.
\end{lemma}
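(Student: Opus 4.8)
The plan is to build $\xi\eta$ as an $L^{1}(\Omega)$-limit of bounded truncations and then to push the factorization identity through that limit. For $N\in\mathbb{N}$ write $\xi_{N}:=(\xi\wedge N)\vee(-N)$, $\eta_{N}:=(\eta\wedge N)\vee(-N)$, and put $a_{N}:=\mathbb{\hat{E}}[(|\xi|-N)^{+}]$, $b_{N}:=\mathbb{\hat{E}}[(|\eta|-N)^{+}]$, so that $a_{N}+b_{N}\to0$ by hypothesis. Since $x\mapsto(x\wedge N)\vee(-N)$ is $1$-Lipschitz, $\xi_{N},\eta_{N}\in L^{1}(\Omega)$ are bounded and, for $M\geq N$, one has the pointwise bounds $|\xi_{M}|=|\xi|\wedge M$, $|\xi_{M}-\xi_{N}|\leq(|\xi|-N)^{+}\wedge(M-N)$ and $|\eta_{M}-\eta_{N}|\leq(|\eta|-N)^{+}\wedge(M-N)$. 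I interpret the product of two bounded elements through a cut-off: with $\phi_{N}(x,y):=((x\wedge N)\vee(-N))\,((y\wedge N)\vee(-N))\in C_{b.Lip}(\mathbb{R}^{2})$, set $\xi_{N}\eta_{N}:=\phi_{N}(\xi,\eta)$; on $\mathcal{H}$ this is the genuine pointwise product of the truncations, so the usual algebraic manipulations extend to $L^{1}(\Omega)$ by continuity.

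The structural fact I would isolate first is that $\mathbb{\hat{E}}[f(\xi)g(\eta)]=\mathbb{\hat{E}}[f(\xi)]\,\mathbb{\hat{E}}[g(\eta)]$ for all nonnegative $f,g\in C_{b.Lip}(\mathbb{R})$: indeed $\phi(x,y):=f(x)g(y)\in C_{b.Lip}(\mathbb{R}^{2})$, so independence gives $\mathbb{\hat{E}}[f(\xi)g(\eta)]=\mathbb{\hat{E}}[\mathbb{\hat{E}}[f(x)g(\eta)]_{x=\xi}]$, and two applications of positive homogeneity (legitimate because $f,g\geq0$) collapse the inner expectation to $f(x)\mathbb{\hat{E}}[g(\eta)]$ and then the outer one to $\mathbb{\hat{E}}[f(\xi)]\,\mathbb{\hat{E}}[g(\eta)]$. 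Applying this with $(f(x),g(y))=(|x|\wedge M,\,(|y|-N)^{+}\wedge(M-N))$ and with $(f(x),g(y))=((|x|-N)^{+}\wedge(M-N),\,|y|\wedge N)$, and combining it with monotonicity of $\mathbb{\hat{E}}$, the truncation bounds, and the identity $\xi_{M}\eta_{M}-\xi_{N}\eta_{N}=\xi_{M}(\eta_{M}-\eta_{N})+(\xi_{M}-\xi_{N})\eta_{N}$, yields for $M\geq N$
\[
\mathbb{\hat{E}}\left[\,\left|\xi_{M}\eta_{M}-\xi_{N}\eta_{N}\right|\,\right]\ \leq\ \mathbb{\hat{E}}[|\xi|]\,b_{N}+\mathbb{\hat{E}}[|\eta|]\,a_{N}\ \longrightarrow\ 0\qquad(N\to\infty).
\]
Hence $\{\xi_{N}\eta_{N}\}_{N}$ is Cauchy in $L^{1}(\Omega)$, and I define $\xi\eta$ to be its limit; in particular $\xi\eta\in L^{1}(\Omega)$.

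It remains to transfer the identity. For fixed $N$, independence gives $\mathbb{\hat{E}}[\xi_{N}\eta_{N}]=\mathbb{\hat{E}}[\phi_{N}(\xi,\eta)]=\mathbb{\hat{E}}[\mathbb{\hat{E}}[\phi_{N}(x,\eta)]_{x=\xi}]$, and since $\phi_{N}(x,\eta)=x\eta_{N}$ whenever $|x|\leq N$ while $\mathbb{\hat{E}}[\phi_{N}(x,\eta)]$ depends on $x$ only through its truncation, this collapses to $\mathbb{\hat{E}}[\xi_{N}\eta_{N}]=\mathbb{\hat{E}}[\mathbb{\hat{E}}[x\eta_{N}]_{x=\xi_{N}}]$. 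Now set $H(x):=\mathbb{\hat{E}}[x\eta]$ and $H_{N}(x):=\mathbb{\hat{E}}[x\eta_{N}]$; the elementary inequality $|\mathbb{\hat{E}}[\zeta_{1}]-\mathbb{\hat{E}}[\zeta_{2}]|\leq\mathbb{\hat{E}}[|\zeta_{1}-\zeta_{2}|]$ gives $|H(x)-H(x')|\leq|x-x'|\,\mathbb{\hat{E}}[|\eta|]$, $|H_{N}(x)-H(x)|\leq|x|\,b_{N}$ and $H(0)=0$, so $H$ has linear growth, $H(\xi)=\mathbb{\hat{E}}[x\eta]_{x=\xi}\in L^{1}(\Omega)$, and consequently $\mathbb{\hat{E}}[|H_{N}(\xi_{N})-H(\xi_{N})|]\leq\mathbb{\hat{E}}[|\xi|]\,b_{N}$ and $\mathbb{\hat{E}}[|H(\xi_{N})-H(\xi)|]\leq\mathbb{\hat{E}}[|\eta|]\,a_{N}$, whence
\[
\left|\mathbb{\hat{E}}[\mathbb{\hat{E}}[x\eta_{N}]_{x=\xi_{N}}]-\mathbb{\hat{E}}[\mathbb{\hat{E}}[x\eta]_{x=\xi}]\right|\ \leq\ \mathbb{\hat{E}}[|\xi|]\,b_{N}+\mathbb{\hat{E}}[|\eta|]\,a_{N}\ \longrightarrow\ 0.
\]
Since also $\mathbb{\hat{E}}[\xi_{N}\eta_{N}]\to\mathbb{\hat{E}}[\xi\eta]$ by the same elementary inequality, letting $N\to\infty$ gives $\mathbb{\hat{E}}[\xi\eta]=\mathbb{\hat{E}}[\mathbb{\hat{E}}[x\eta]_{x=\xi}]$, as asserted.

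The main obstacle is not any individual estimate but the discipline of staying inside the hypotheses: independence is granted only against $C_{b.Lip}$ test functions, whereas the natural error terms such as $(|\xi|-N)^{+}|\eta|$ are unbounded, so every factorization of an expectation must first truncate the unbounded factor — at level $M-N$, which is precisely what makes the Cauchy bound uniform in $M$ — and then apply positive homogeneity only to nonnegative pieces. I would also be careful to read each product as the image of the appropriate $C_{b.Lip}$ cut-off and to check every pointwise algebraic identity on $\mathcal{H}$ before extending it to $L^{1}(\Omega)$ by continuity.
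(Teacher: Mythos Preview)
Your proposal is correct and follows essentially the same approach as the paper's proof: truncate $\xi$ and $\eta$ by $C_{b.Lip}$ cut-offs, use independence together with positive homogeneity to factor $\mathbb{\hat{E}}[f(\xi)g(\eta)]$ for nonnegative $f,g$, obtain a Cauchy estimate of the form $\mathbb{\hat{E}}[|\xi|]\,b_{N}+\mathbb{\hat{E}}[|\eta|]\,a_{N}$, and pass the factorized identity $\mathbb{\hat{E}}[\xi_{N}\eta_{N}]=\mathbb{\hat{E}}[\mathbb{\hat{E}}[x\eta_{N}]_{x=\xi_{N}}]$ to the limit via the same Lipschitz/linear-growth bounds on $H$ and $H_{N}$. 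The only cosmetic difference is that the paper sends the two truncation levels to infinity in two stages (first $N_{2}\to\infty$ to get $\xi^{N_{1}}\eta\in L^{1}(\Omega)$, then $N_{1}\to\infty$), whereas you run a single diagonal and are slightly more explicit about keeping every test function in $C_{b.Lip}$.
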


\begin{proof}
Set $\xi^{N}=(\xi \wedge N)\vee(-N)$ and $\eta^{N}=(\eta \wedge N)\vee(-N)$. It
is easy to check that $(|\xi|-N)^{+}=|\xi-\xi^{N}|$ and $(|\eta|-N)^{+}%
=|\eta-\eta^{N}|$. Since $\eta$ is independent of $\xi$, we have%
\[
\mathbb{\hat{E}}\left[  |\xi^{N_{1}}\eta^{N_{2}^{\prime}}-\xi^{N_{1}}%
\eta^{N_{2}}|\right]  =\mathbb{\hat{E}}\left[  |\xi^{N_{1}}|\right]
\mathbb{\hat{E}}\left[  |\eta^{N_{2}^{\prime}}-\eta^{N_{2}}|\right]
\leq \mathbb{\hat{E}}\left[  |\xi|\right]  \mathbb{\hat{E}}\left[
(|\eta|-N_{2})^{+}\right]
\]
for each $N_{1}>0$ and $N_{2}^{\prime}>N_{2}>0$. Thus $\xi^{N_{1}}\eta \in
L^{1}(\Omega)$ for each $N_{1}>0$. Similarly, we can get%
\[
\mathbb{\hat{E}}\left[  |\xi^{N_{1}^{\prime}}\eta-\xi^{N_{1}}\eta|\right]
\leq \mathbb{\hat{E}}\left[  (|\xi|-N_{1})^{+}\right]  \mathbb{\hat{E}}\left[
|\eta|\right]
\]
for $N_{1}^{\prime}>N_{1}>0$. Thus $\xi \eta \in L^{1}(\Omega)$. Define
$\psi_{N}(x)=\mathbb{\hat{E}}\left[  x\eta^{N}\right]  $ and $\psi
(x)=\mathbb{\hat{E}}\left[  x\eta \right]  $ for $x\in \mathbb{R}$ and $N>0$, we
have%
\[
|\psi_{N}(x)-\psi(x)|\leq \mathbb{\hat{E}}\left[  |x\eta^{N}-x\eta|\right]
=|x|\mathbb{\hat{E}}\left[  (|\eta|-N)^{+}\right]  \text{ and }|\psi
(x)-\psi(x^{\prime})|\leq|x-x^{\prime}|\mathbb{\hat{E}}\left[  |\eta|\right]
.
\]
Then we obtain%
\begin{align*}
|\mathbb{\hat{E}}\left[  \psi_{N}(\xi^{N})\right]  -\mathbb{\hat{E}}\left[
\psi(\xi)\right]  |  &  \leq \mathbb{\hat{E}}\left[  |\psi_{N}(\xi^{N}%
)-\psi(\xi^{N})|+|\psi(\xi^{N})-\psi(\xi)|\right] \\
&  \leq \mathbb{\hat{E}}\left[  |\xi|\right]  \mathbb{\hat{E}}\left[
(|\eta|-N)^{+}\right]  +\mathbb{\hat{E}}\left[  (|\xi|-N)^{+}\right]
\mathbb{\hat{E}}\left[  |\eta|\right]  .
\end{align*}
Since $\mathbb{\hat{E}}\left[  \xi^{N}\eta^{N}\right]  =\mathbb{\hat{E}%
}\left[  \psi_{N}(\xi^{N})\right]  $ and $\mathbb{\hat{E}}\left[  \xi^{N}%
\eta^{N}\right]  \rightarrow \mathbb{\hat{E}}\left[  \xi \eta \right]  $ as
$N\rightarrow \infty$, we deduce $\mathbb{\hat{E}}\left[  \xi \eta \right]
=\mathbb{\hat{E}}\left[  \psi(\xi)\right]  $.
\end{proof}

\begin{lemma}
\label{le-10}Let the assumptions in Theorem \ref{th-8} hold. Then, for each
$(A,p)\in \mathbb{S}_{d}\times \mathbb{R}^{d}$ and $N>0$, we have%
\begin{equation}
\mathbb{\hat{E}}\left[  \frac{1}{2}\langle AS_{n,k}^{X},S_{n,k}^{X}%
\rangle+\langle p,S_{n,k}^{Y}\rangle \right]  =k\mathbb{\hat{E}}\left[
\frac{1}{2}\langle AX_{n,1},X_{n,1}\rangle+\langle p,Y_{n,1}\rangle \right]  ,
\label{e2-2}%
\end{equation}%
\begin{equation}
\mathbb{\tilde{E}}\left[  \frac{1}{2}\langle AS_{n,k}^{X},S_{n,k}^{X}%
\rangle+\langle p,S_{n,k}^{Y}\rangle \right]  =k\mathbb{\tilde{E}}\left[
\frac{1}{2}\langle AX_{n,1},X_{n,1}\rangle+\langle p,Y_{n,1}\rangle \right]  ,
\label{e2-3}%
\end{equation}%
\begin{equation}
\mathbb{\hat{E}}\left[  (|S_{n,k}^{X}|^{2}-N)^{+}+(|S_{n,k}^{Y}|-N)^{+}%
\right]  \leq \mathbb{\hat{E}}\left[  \left(  k|X_{n,1}|^{2}-\frac{N}%
{2}\right)  ^{+}+(k|Y_{n,1}|-N)^{+}\right]  +\frac{4\left(  k\mathbb{\hat{E}%
}\left[  |X_{n,1}|^{2}\right]  \right)  ^{2}}{N}, \label{e2-4}%
\end{equation}
where $S_{n,k}^{X}=\sum_{i=1}^{k}X_{n,i}$ and $S_{n,k}^{Y}=\sum_{i=1}%
^{k}Y_{n,i}$ for $k\leq n$.
\end{lemma}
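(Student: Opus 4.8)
The plan is to prove (\ref{e2-2}) and (\ref{e2-3}) first, by induction on $k\le n$, and then to deduce (\ref{e2-4}) using (\ref{e2-2}). Note that $|X_{n,1}|^{2}$, $|Y_{n,1}|\in L^{1}(\Omega)$ for each fixed $n$ by (\ref{e2-00}); with the help of Lemma \ref{le-9}, all the random variables occurring below lie in $L^{1}(\Omega)$ and the independence relations of Theorem \ref{th-8} (stated for $\phi\in C_{b.Lip}$) extend to them, and both $\mathbb{\hat{E}}[\cdot]$ and $\mathbb{\tilde{E}}[\cdot]$ commute with the addition of constants. For (\ref{e2-2}) and (\ref{e2-3}) the case $k=1$ is trivial. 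For the inductive step, using the symmetry of $A$, write
\[
\tfrac12\langle AS_{n,k}^{X},S_{n,k}^{X}\rangle+\langle p,S_{n,k}^{Y}\rangle=\Xi_{k-1}+\langle AS_{n,k-1}^{X},X_{n,k}\rangle+\big(\tfrac12\langle AX_{n,k},X_{n,k}\rangle+\langle p,Y_{n,k}\rangle\big),
\]
where $\Xi_{k-1}:=\tfrac12\langle AS_{n,k-1}^{X},S_{n,k-1}^{X}\rangle+\langle p,S_{n,k-1}^{Y}\rangle$; here $\Xi_{k-1}$ and $S_{n,k-1}^{X}$ are functions of $(X_{n,1},\ldots,X_{n,k-1},Y_{n,1},\ldots,Y_{n,k-1})$, which is independent of $(X_{n,k},Y_{n,k})$ under $\mathbb{\hat{E}}[\cdot]$ and $\mathbb{\tilde{E}}[\cdot]$. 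Conditioning on the past and noting that, for each fixed $s\in\mathbb{R}^{d}$, every component $X_{n,k}^{j}$ satisfies $\mathbb{\hat{E}}[X_{n,k}^{j}]=-\mathbb{\hat{E}}[-X_{n,k}^{j}]=0$ (by (\ref{e2-00}) and $X_{n,k}\overset{d}{=}X_{n,1}$), Proposition \ref{pro-1} applied component by component gives $\mathbb{\hat{E}}[\langle As,X_{n,k}\rangle+Z]=\mathbb{\hat{E}}[Z]$ and $\mathbb{\tilde{E}}[\langle As,X_{n,k}\rangle+Z]=\mathbb{\tilde{E}}[Z]$. Hence, using $(X_{n,k},Y_{n,k})\overset{d}{=}(X_{n,1},Y_{n,1})$, the inner expectation equals $\Xi_{k-1}$ plus the constant $\mathbb{\hat{E}}[\tfrac12\langle AX_{n,1},X_{n,1}\rangle+\langle p,Y_{n,1}\rangle]$ (resp. its $\mathbb{\tilde{E}}$-version); taking the outer expectation and applying the induction hypothesis to $\mathbb{\hat{E}}[\Xi_{k-1}]$ (resp. $\mathbb{\tilde{E}}[\Xi_{k-1}]$) produces the factor $k$, which proves (\ref{e2-2}) and (\ref{e2-3}).

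For (\ref{e2-4}), set $M_{n,k}:=\sum_{1\le i<j\le k}\langle X_{n,i},X_{n,j}\rangle$, so that $|S_{n,k}^{X}|^{2}=\sum_{i=1}^{k}|X_{n,i}|^{2}+2M_{n,k}$. From the elementary inequalities $(a+b-N)^{+}\le(a-\tfrac N2)^{+}+(b-\tfrac N2)^{+}$ and $(\sum_{i=1}^{k}a_{i}-c)^{+}\le\sum_{i=1}^{k}(a_{i}-\tfrac ck)^{+}$, together with subadditivity and positive homogeneity of $\mathbb{\hat{E}}[\cdot]$ and the identical distributions, one gets
\[
\mathbb{\hat{E}}\big[(|S_{n,k}^{X}|^{2}-N)^{+}\big]\le\mathbb{\hat{E}}\big[(k|X_{n,1}|^{2}-\tfrac N2)^{+}\big]+\mathbb{\hat{E}}\big[(2M_{n,k}-\tfrac N2)^{+}\big],
\]
and, since $|S_{n,k}^{Y}|\le\sum_{i=1}^{k}|Y_{n,i}|$, also $\mathbb{\hat{E}}[(|S_{n,k}^{Y}|-N)^{+}]\le k\mathbb{\hat{E}}[(|Y_{n,1}|-\tfrac Nk)^{+}]=\mathbb{\hat{E}}[(k|Y_{n,1}|-N)^{+}]$. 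Because $(2m-\tfrac N2)^{+}\le 8m^{2}/N$ for all $m\in\mathbb{R}$, the estimate (\ref{e2-4}) will follow once we establish the key bound
\[
\mathbb{\hat{E}}[M_{n,k}^{2}]\le\tfrac12\big(k\mathbb{\hat{E}}[|X_{n,1}|^{2}]\big)^{2}.
\]

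This quadratic bound on $M_{n,k}$ --- the ``good property'' of $\sum_{i<j}\langle X_{n,i},X_{n,j}\rangle$ announced in the introduction --- is the heart of the lemma; I would prove it by induction on $k$, starting from $M_{n,1}=0$. From $M_{n,k}=M_{n,k-1}+\langle S_{n,k-1}^{X},X_{n,k}\rangle$, expand $M_{n,k}^{2}=M_{n,k-1}^{2}+2M_{n,k-1}\langle S_{n,k-1}^{X},X_{n,k}\rangle+\langle S_{n,k-1}^{X},X_{n,k}\rangle^{2}$ and condition on $(X_{n,1},\ldots,X_{n,k-1})$: the middle term is, conditionally, a linear function of $X_{n,k}$ and therefore drops out of the inner $\mathbb{\hat{E}}[\cdot]$ just as above, while $\langle S_{n,k-1}^{X},X_{n,k}\rangle^{2}\le|S_{n,k-1}^{X}|^{2}|X_{n,k}|^{2}$ by Cauchy--Schwarz, so the inner expectation is at most $M_{n,k-1}^{2}+|S_{n,k-1}^{X}|^{2}\mathbb{\hat{E}}[|X_{n,1}|^{2}]$. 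Taking the outer expectation and using subadditivity together with $\mathbb{\hat{E}}[|S_{n,k-1}^{X}|^{2}]=(k-1)\mathbb{\hat{E}}[|X_{n,1}|^{2}]$ (which is (\ref{e2-2}) with $A=I_{d}$, $p=0$) yields the recursion $\mathbb{\hat{E}}[M_{n,k}^{2}]\le\mathbb{\hat{E}}[M_{n,k-1}^{2}]+(k-1)(\mathbb{\hat{E}}[|X_{n,1}|^{2}])^{2}$; summing from $2$ to $k$ gives $\mathbb{\hat{E}}[M_{n,k}^{2}]\le\tfrac{k(k-1)}{2}(\mathbb{\hat{E}}[|X_{n,1}|^{2}])^{2}\le\tfrac12(k\mathbb{\hat{E}}[|X_{n,1}|^{2}])^{2}$, as desired. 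The main obstacle is exactly this recursion for $\mathbb{\hat{E}}[M_{n,k}^{2}]$: it goes through only because the increments $\langle S_{n,k-1}^{X},X_{n,k}\rangle$ carry no conditional mean uncertainty under $\mathbb{\hat{E}}[\cdot]$, which is what lets Proposition \ref{pro-1} annihilate the cross term and leave a tractable ``orthogonal'' sum; the accompanying point --- that the independence relations extend from $C_{b.Lip}$ to the unbounded polynomials used above --- is routine given Lemma \ref{le-9} and $|X_{n,1}|^{2},|Y_{n,1}|\in L^{1}(\Omega)$.
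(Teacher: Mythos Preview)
Your proposal is correct and follows essentially the same approach as the paper: both use Proposition~\ref{pro-1} (together with Lemma~\ref{le-9} for the passage to unbounded integrands) to remove the cross terms $\langle X_{n,i},X_{n,j}\rangle$ in proving (\ref{e2-2})--(\ref{e2-3}), and then exploit the ``orthogonality'' of the increments $\langle S_{n,i}^{X},X_{n,i+1}\rangle$ under $\mathbb{\hat{E}}[\cdot]$ to bound $\mathbb{\hat{E}}[M_{n,k}^{2}]$ by $\tfrac12(k\,\mathbb{\hat{E}}[|X_{n,1}|^{2}])^{2}$. The only cosmetic difference is that you carry out both steps by induction on $k$, whereas the paper removes all cross terms at once and computes $\mathbb{\hat{E}}\big[\big|\sum_{i=1}^{k-1}\langle S_{n,i}^{X},X_{n,i+1}\rangle\big|^{2}\big]$ directly; the constants and the final bound coincide.
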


\begin{proof}
By Lemma \ref{le-9}, we have $\mathbb{\hat{E}}\left[  \langle X_{n,i}%
,X_{n,j}\rangle \right]  =\mathbb{\hat{E}}\left[  -\langle X_{n,i}%
,X_{n,j}\rangle \right]  =0$ for $i\not =j$. Then, by Proposition \ref{pro-1},
we get%
\begin{equation}
\mathbb{\hat{E}}\left[  \frac{1}{2}\langle AS_{n,k}^{X},S_{n,k}^{X}%
\rangle+\langle p,S_{n,k}^{Y}\rangle \right]  =\mathbb{\hat{E}}\left[
\sum_{i=1}^{k}\left(  \frac{1}{2}\langle AX_{n,i},X_{n,i}\rangle+\langle
p,Y_{n,i}\rangle \right)  \right]  \label{e2-5}%
\end{equation}
and%
\begin{equation}
\mathbb{\tilde{E}}\left[  \frac{1}{2}\langle AS_{n,k}^{X},S_{n,k}^{X}%
\rangle+\langle p,S_{n,k}^{Y}\rangle \right]  =\mathbb{\tilde{E}}\left[
\sum_{i=1}^{k}\left(  \frac{1}{2}\langle AX_{n,i},X_{n,i}\rangle+\langle
p,Y_{n,i}\rangle \right)  \right]  . \label{e2-6}%
\end{equation}
Noting that $(X_{n,i},Y_{n,i})\overset{d}{=}(X_{n,1},Y_{n,1})$ and
$(X_{n,i},Y_{n,i})$ is independent of $(X_{n,1}$,$\ldots$,$X_{n,i-1}$%
,$Y_{n,1}$,$\ldots$,$Y_{n,i-1})$ for $i=2$,$\ldots$,$n$ under $\mathbb{\tilde
{E}}[\cdot]$ and $\mathbb{\hat{E}}[\cdot]$ respectively, we obtain
(\ref{e2-2}) and (\ref{e2-3}) from (\ref{e2-5}) and (\ref{e2-6}). Since
$(a+b)^{+}\leq a^{+}+b^{+}$ and%
\[
|S_{n,k}^{X}|^{2}=\sum_{i=1}^{k}|X_{n,i}|^{2}+2\sum_{1\leq i<j\leq k}\langle
X_{n,i},X_{n,j}\rangle=\sum_{i=1}^{k}|X_{n,i}|^{2}+2\sum_{i=1}^{k-1}\langle
S_{n,i}^{X},X_{n,i+1}\rangle,
\]
we deduce%
\begin{align*}
(|S_{n,k}^{X}|^{2}-N)^{+}+(|S_{n,k}^{Y}|-N)^{+}  &  \leq \sum_{i=1}^{k}\left[
\left(  |X_{n,i}|^{2}-\frac{N}{2k}\right)  ^{+}+\left(  |Y_{n,i}|-\frac{N}%
{k}\right)  ^{+}\right] \\
&  \  \  \  \ +2\left(  \sum_{i=1}^{k-1}\langle S_{n,i}^{X},X_{n,i+1}%
\rangle-\frac{N}{4}\right)  ^{+}.
\end{align*}
Thus%
\begin{align*}
\mathbb{\hat{E}}\left[  (|S_{n,k}^{X}|^{2}-N)^{+}+(|S_{n,k}^{Y}|-N)^{+}%
\right]   &  \leq k\mathbb{\hat{E}}\left[  \left(  |X_{n,1}|^{2}-\frac{N}%
{2k}\right)  ^{+}+\left(  |Y_{n,1}|-\frac{N}{k}\right)  ^{+}\right] \\
&  \  \  \  \ +\frac{8}{N}\mathbb{\hat{E}}\left[  \left \vert \sum_{i=1}%
^{k-1}\langle S_{n,i}^{X},X_{n,i+1}\rangle \right \vert ^{2}\right]  .
\end{align*}
It is easy to verify that $k(|X_{n,1}|^{2}-\frac{N}{2k})^{+}=(k|X_{n,1}%
|^{2}-\frac{N}{2})^{+}$. By Lemma \ref{le-9}, we have $|X_{n,i}|^{2}%
|X_{n,j}|^{2}\in L^{1}(\Omega)$ for $i\not =j$. Note that
\[
\mathbb{\hat{E}}\left[  \langle S_{n,i}^{X},X_{n,i+1}\rangle \langle
S_{n,j}^{X},X_{n,j+1}\rangle \right]  =\mathbb{\hat{E}}\left[  -\langle
S_{n,i}^{X},X_{n,i+1}\rangle \langle S_{n,j}^{X},X_{n,j+1}\rangle \right]
=0\text{ for }i\not =j.
\]
Then%
\[
\mathbb{\hat{E}}\left[  \left \vert \sum_{i=1}^{k-1}\langle S_{n,i}%
^{X},X_{n,i+1}\rangle \right \vert ^{2}\right]  =\mathbb{\hat{E}}\left[
\sum_{i=1}^{k-1}|\langle S_{n,i}^{X},X_{n,i+1}\rangle|^{2}\right]  \leq
\sum_{i=1}^{k-1}\mathbb{\hat{E}}[|S_{n,i}^{X}|^{2}|X_{n,i+1}|^{2}]\leq
\frac{\left(  k\mathbb{\hat{E}}\left[  |X_{n,1}|^{2}\right]  \right)  ^{2}}%
{2}.
\]
Thus we obtain (\ref{e2-4}).
\end{proof}

\noindent \textbf{Proof of Theorem \ref{th-8}.} For each $n\in \mathbb{N}$,
define $\mathbb{\tilde{F}}_{n}[\cdot]$ and $\mathbb{\hat{F}}_{n}[\cdot]$ on
$(\mathbb{R}^{2d},C_{b.Lip}(\mathbb{R}^{2d}))$ as follows: for $\phi \in
C_{b.Lip}(\mathbb{R}^{2d})$,
\[
\mathbb{\tilde{F}}_{n}[\phi]:=\mathbb{\tilde{E}}\left[  \phi \left(  \sum
_{i=1}^{n}X_{n,i},\sum_{i=1}^{n}Y_{n,i}\right)  \right]  ,\text{ }%
\mathbb{\hat{F}}_{n}[\phi]:=\mathbb{\hat{E}}\left[  \phi \left(  \sum_{i=1}%
^{n}X_{n,i},\sum_{i=1}^{n}Y_{n,i}\right)  \right]  .
\]
Set%
\[
\mathbb{\hat{F}}[\phi]=\sup_{n\geq1}\mathbb{\hat{F}}_{n}[\phi]\text{ for }%
\phi \in C_{b.Lip}(\mathbb{R}^{2d}).
\]
It is easy to verify that $\mathbb{\hat{F}}[\cdot]$ is a sublinear expectation
on $(\mathbb{R}^{d},C_{b.Lip}(\mathbb{R}^{d}))$ and $\mathbb{\tilde{F}}%
_{n}[\cdot]$ is dominated by $\mathbb{\hat{F}}[\cdot]$ for $n\geq1$. Note that%
\[
\mathbb{\hat{E}}\left[  |S_{n,n}^{X}|^{2}+|S_{n,n}^{Y}|\right]  \leq
n\mathbb{\hat{E}}\left[  |X_{n,1}|^{2}+|Y_{n,1}|\right]  \leq2N+\mathbb{\hat
{E}}\left[  (n|X_{n,1}|^{2}-N)^{+}+(n|Y_{n,1}|-N)^{+}\right]  ,
\]
where $S_{n,n}^{X}$ and $S_{n,n}^{Y}$ are defined in Lemma \ref{le-10}. Then
we obtain%
\begin{equation}
\sup_{n\geq1}\mathbb{\hat{E}}\left[  |S_{n,n}^{X}|^{2}+|S_{n,n}^{Y}|\right]
\leq \sup_{n\geq1}n\mathbb{\hat{E}}\left[  |X_{n,1}|^{2}+|Y_{n,1}|\right]
<\infty \label{e2-7}%
\end{equation}
by (\ref{e2-00}). From this we can easily get $\mathbb{\hat{F}}[|x|]<\infty$,
which implies that $\mathbb{\hat{F}}[\cdot]$ is tight by Lemma \ref{le-5}.

Set $\bar{\Omega}=(\mathbb{R}^{2d})^{[0,1]}$, the canonical process is
$B_{t}(\omega)=(\tilde{B}_{t}(\omega),\bar{B}_{t}(\omega)):=\omega_{t}$ for
$t\in \lbrack0,1]$ and $\omega \in \bar{\Omega}$, where $\tilde{B}$ and $\bar{B}$
are $d$-dimensional processes. For each given nonnegative integer $m$, set
$\mathcal{D}_{m}=\{t_{j}^{m}=j2^{-m}:j=0$,$\ldots$,$2^{m}\}$ and define%
\[
\mathcal{\bar{H}}_{m}=\{ \phi(B_{t_{1}^{m}}-B_{t_{0}^{m}},B_{t_{2}^{m}%
}-B_{t_{1}^{m}},\ldots,B_{t_{2^{m}}^{m}}-B_{t_{2^{m}-1}^{m}}):\forall \phi \in
C_{b.Lip}(\mathbb{R}^{2^{m+1}d})\}.
\]
It is easy to check that $\mathcal{D}_{m}\subset \mathcal{D}_{n+1}$ and
$\mathcal{\bar{H}}_{m}\subset \mathcal{\bar{H}}_{m+1}$. Set $\mathcal{D}%
=\cup_{m=0}^{\infty}\mathcal{D}_{m}$ and $\mathcal{\bar{H}}=\cup_{m=0}%
^{\infty}\mathcal{\bar{H}}_{m}$.

Step 1: We construct a nonlinear expectation $\mathbb{\bar{E}}[\cdot]$ and a
sublinear expectation $\mathbb{E}^{\ast}[\cdot]$ on $(\bar{\Omega
},\mathcal{\bar{H}})$.

Let $\{ \mathbb{\tilde{F}}_{n_{i}}[\cdot]\}_{i=1}^{\infty}$ be any given
subsequence of $\{ \mathbb{\tilde{F}}_{n}[\cdot]\}_{n=1}^{\infty}$. Note that
$\{ \mathbb{\tilde{F}}_{n_{i}}[\cdot]\}_{i=1}^{\infty}$ and $\{ \mathbb{\hat
{F}}_{n_{i}}[\cdot]\}_{i=1}^{\infty}$ are tight. Then, by Theorem \ref{th-6},
there exists a subsequence $\{n_{i}^{0}\}_{i=1}^{\infty}$ of $\{n_{i}%
\}_{i=1}^{\infty}$ such that $\mathbb{\tilde{F}}_{n_{i}^{0}}[\phi]$ and
$\mathbb{\hat{F}}_{n_{i}^{0}}[\phi]$ converge as $i\rightarrow \infty$ for each
$\phi \in C_{b.Lip}(\mathbb{R}^{2d})$. Define a nonlinear expectation
$\mathbb{\bar{E}}_{0}[\cdot]:$ $\mathcal{\bar{H}}_{0}\rightarrow \mathbb{R}$
and a sublinear expectation $\mathbb{E}_{0}^{\ast}[\cdot]:$ $\mathcal{\bar{H}%
}_{0}\rightarrow \mathbb{R}$ as follows: for each $\phi \in C_{b.Lip}%
(\mathbb{R}^{2d})$,
\begin{equation}
\mathbb{\bar{E}}_{0}[\phi(B_{1}-B_{0})]:=\lim_{i\rightarrow \infty
}\mathbb{\tilde{F}}_{n_{i}^{0}}[\phi]\text{ and }\mathbb{E}_{0}^{\ast}%
[\phi(B_{1}-B_{0})]:=\lim_{i\rightarrow \infty}\mathbb{\hat{F}}_{n_{i}^{0}%
}[\phi]. \label{e2-07}%
\end{equation}
Set
\[
\xi_{n_{i}^{0}}=\left \{
\begin{array}
[c]{ll}%
(X_{n_{i}^{0},n_{i}^{0}},Y_{n_{i}^{0},n_{i}^{0}}),\text{ } & \text{if }%
n_{i}^{0}\text{ is an odd number,}\\
(0,0), & \text{if }n_{i}^{0}\text{ is an even number.}%
\end{array}
\right.
\]
For each given $\phi \in C_{b.Lip}(\mathbb{R}^{2d})$, we have%
\begin{align*}
|\mathbb{\tilde{E}}[\phi(\xi_{n_{i}^{0}})]-\mathbb{\tilde{E}}[\phi
(0)]|+|\mathbb{\hat{E}}[\phi(\xi_{n_{i}^{0}})]-\mathbb{\hat{E}}[\phi(0)]|  &
\leq2L_{\phi}\mathbb{\hat{E}}\left[  |X_{n_{i}^{0},n_{i}^{0}}|+|Y_{n_{i}%
^{0},n_{i}^{0}}|\right] \\
&  \leq2L_{\phi}\{(\mathbb{\hat{E}}[|X_{n_{i}^{0},1}|^{2}])^{1/2}%
+\mathbb{\hat{E}}[|Y_{n_{i}^{0},1}|]\},
\end{align*}
which tends to $0$ as $i\rightarrow \infty$ by (\ref{e2-7}). Thus $\xi
_{n_{i}^{0}}$ converges in distribution to $0$ under $\mathbb{\tilde{E}}%
[\cdot]$ and $\mathbb{\hat{E}}[\cdot]$ respectively. For $\phi \in
C_{b.Lip}(\mathbb{R}^{2d})$, define%
\[
\mathbb{\tilde{F}}_{n_{i}^{0}}^{1}[\phi]:=\mathbb{\tilde{E}}\left[
\phi \left(  \sum_{k=1}^{[n_{i}^{0}/2]}X_{n_{i}^{0},k},\sum_{k=1}^{[n_{i}%
^{0}/2]}Y_{n_{i}^{0},k}\right)  \right]  \text{, }\mathbb{\hat{F}}_{n_{i}^{0}%
}^{1}[\phi]:=\mathbb{\hat{E}}\left[  \phi \left(  \sum_{k=1}^{[n_{i}^{0}%
/2]}X_{n_{i}^{0},k},\sum_{k=1}^{[n_{i}^{0}/2]}Y_{n_{i}^{0},k}\right)  \right]
.
\]
It is easy to verify that $\{ \mathbb{\tilde{F}}_{n_{i}^{0}}^{1}%
\}_{i=1}^{\infty}$ and $\{ \mathbb{\hat{F}}_{n_{i}^{0}}^{1}\}_{i=1}^{\infty}$
are tight. Then, by Theorem \ref{th-6}, there exists a subsequence
$\{n_{i}^{1}\}_{i=1}^{\infty}$ of $\{n_{i}^{0}\}_{i=1}^{\infty}$ such that
$\mathbb{\tilde{F}}_{n_{i}^{1}}^{1}[\phi]$ and $\mathbb{\hat{F}}_{n_{i}^{1}%
}[\phi]$ converge as $i\rightarrow \infty$ for each $\phi \in C_{b.Lip}%
(\mathbb{R}^{2d})$. Define a nonlinear expectation $\mathbb{\bar{E}}_{1}%
[\cdot]:$ $\mathcal{\bar{H}}_{1}\rightarrow \mathbb{R}$ and a sublinear
expectation $\mathbb{E}_{1}^{\ast}[\cdot]:$ $\mathcal{\bar{H}}_{1}%
\rightarrow \mathbb{R}$ as follows: for each $\phi \in C_{b.Lip}(\mathbb{R}%
^{4d})$,%
\[
\mathbb{\bar{E}}_{1}[\phi(B_{1/2}-B_{0},B_{1}-B_{1/2})]:=\lim_{i\rightarrow
\infty}\mathbb{\tilde{F}}_{n_{i}^{1}}^{1}[\bar{\psi}],\text{ }\mathbb{E}%
_{1}^{\ast}[\phi(B_{1/2}-B_{0},B_{1}-B_{1/2})]:=\lim_{i\rightarrow \infty
}\mathbb{\hat{F}}_{n_{i}^{1}}[\psi],
\]
where $\bar{\psi}(x):=\lim_{i\rightarrow \infty}\mathbb{\tilde{F}}_{n_{i}^{1}%
}^{1}[\phi(x,\cdot)]$ and $\psi(x):=\lim_{i\rightarrow \infty}\mathbb{\hat{F}%
}_{n_{i}^{1}}[\phi(x,\cdot)]$ for $x\in \mathbb{R}^{2d}$. Repeating this
process, we can define a sequence of nonlinear expectations $\mathbb{\bar{E}%
}_{m}[\cdot]:$ $\mathcal{\bar{H}}_{m}\rightarrow \mathbb{R}$ and sublinear
expectations $\mathbb{E}_{m}^{\ast}[\cdot]:$ $\mathcal{\bar{H}}_{m}%
\rightarrow \mathbb{R}$, $m\geq0$, such that $B_{t_{j+1}^{m}}-B_{t_{j}^{m}}$ is
independent of $(B_{t_{1}^{m}}-B_{t_{0}^{m}}$,$\ldots$,$B_{t_{j}^{m}%
}-B_{t_{j-1}^{m}})$ for $j=1$,$\ldots$,$2^{m}-1$ under $\mathbb{\bar{E}}%
_{m}[\cdot]$ and $\mathbb{E}_{m}^{\ast}[\cdot]$,
\begin{equation}
\mathbb{\bar{E}}_{m}[\phi(B_{t_{j}^{m}}-B_{t_{j-1}^{m}})]=\lim_{i\rightarrow
\infty}\mathbb{\tilde{E}}\left[  \phi \left(  \sum_{k=1}^{[n_{i}^{m}/2^{m}%
]}X_{n_{i}^{m},k},\sum_{k=1}^{[n_{i}^{m}/2^{m}]}Y_{n_{i}^{m},k}\right)
\right]  , \label{e2-8}%
\end{equation}%
\begin{equation}
\mathbb{E}_{m}^{\ast}[\phi(B_{t_{j}^{m}}-B_{t_{j-1}^{m}})]=\lim_{i\rightarrow
\infty}\mathbb{\hat{E}}\left[  \phi \left(  \sum_{k=1}^{[n_{i}^{m}/2^{m}%
]}X_{n_{i}^{m},k},\sum_{k=1}^{[n_{i}^{m}/2^{m}]}Y_{n_{i}^{m},k}\right)
\right]  \label{e2-9}%
\end{equation}
for $\phi \in C_{b.Lip}(\mathbb{R}^{2d})$ and $j=1$,$\ldots$,$2^{m}$, where
$\{n_{i}^{m}\}_{i=1}^{\infty}$ is a subsequence of $\{n_{i}^{m-1}%
\}_{i=1}^{\infty}$ for $m\geq1$. By Proposition \ref{pro-7}, we obtain
$\mathbb{\bar{E}}_{m}|_{\mathcal{\bar{H}}_{m-1}}=\mathbb{\bar{E}}_{m-1}$ and
$\mathbb{E}_{m}^{\ast}|_{\mathcal{\bar{H}}_{m-1}}=\mathbb{E}_{m-1}^{\ast}$ for
$m\geq1$. Thus we can define a nonlinear expectation $\mathbb{\bar{E}}[\cdot]$
and a sublinear expectation $\mathbb{E}^{\ast}[\cdot]$ on $(\bar{\Omega
},\mathcal{\bar{H}})$ such that $\mathbb{\bar{E}}|_{\mathcal{\bar{H}}_{m}%
}=\mathbb{\bar{E}}_{m}$ and $\mathbb{E}^{\ast}|_{\mathcal{\bar{H}}_{m}%
}=\mathbb{E}_{m}^{\ast}$ for $m\geq0$. It is obvious that $\mathbb{\bar{E}%
}[\cdot]$ is dominated by $\mathbb{E}^{\ast}[\cdot]$.

Step 2: We study the properties of $\mathbb{\bar{E}}[\cdot]$ and
$\mathbb{E}^{\ast}[\cdot]$.

By (\ref{e2-4}), we obtain%
\begin{align*}
&  \mathbb{\hat{E}}\left[  \left(  |S_{n,k}^{X}|^{2}-\frac{kN}{n}\right)
^{+}+\left(  |S_{n,k}^{Y}|-\frac{kN}{n}\right)  ^{+}\right] \\
&  \leq \frac{k}{n}\mathbb{\hat{E}}\left[  \left(  n|X_{n,1}|^{2}-\frac{N}%
{2}\right)  ^{+}+(n|Y_{n,1}|-N)^{+}\right]  +\frac{4k\left(  n\mathbb{\hat{E}%
}\left[  |X_{n,1}|^{2}\right]  \right)  ^{2}}{nN},
\end{align*}
for $n\in \mathbb{N}$ and $k\leq n$. Thus, by (\ref{e2-00}), (\ref{e2-7}) and
(\ref{e2-9}), we obtain%
\begin{equation}%
\begin{array}
[c]{l}%
\sup_{m\geq0}2^{m}\mathbb{E}^{\ast}[(|\tilde{B}_{2^{-m}}-\tilde{B}_{0}%
|^{2}-N2^{-m})^{+}+(|\bar{B}_{2^{-m}}-\bar{B}_{0}|-N2^{-m})^{+}]\\
\leq \sup_{n\geq1}\{ \mathbb{\hat{E}}\left[  (n|X_{n,1}|^{2}-2^{-1}%
N)^{+}+(n|Y_{n,1}|-N)^{+}\right]  +4N^{-1}\left(  n\mathbb{\hat{E}}\left[
|X_{n,1}|^{2}\right]  \right)  ^{2}\} \\
\rightarrow0\text{ as }N\rightarrow \infty.
\end{array}
\label{e2-10}%
\end{equation}
Note that%
\begin{align*}
&  \sup_{m\geq0}2^{m}\mathbb{E}^{\ast}[|\tilde{B}_{2^{-m}}-\tilde{B}_{0}%
|^{2}+|\bar{B}_{2^{-m}}-\bar{B}_{0}|]\\
&  \leq2N+\sup_{m\geq0}2^{m}\mathbb{E}^{\ast}[(|\tilde{B}_{2^{-m}}-\tilde
{B}_{0}|^{2}-N2^{-m})^{+}+(|\bar{B}_{2^{-m}}-\bar{B}_{0}|-N2^{-m})^{+}].
\end{align*}
Then we get%
\begin{equation}
\sup_{m\geq0}2^{m}\mathbb{E}^{\ast}[|\tilde{B}_{2^{-m}}-\tilde{B}_{0}%
|^{2}+|\bar{B}_{2^{-m}}-\bar{B}_{0}|]\leq C \label{e2-11}%
\end{equation}
by (\ref{e2-10}), where $C>0$ is a constant. It follows from (\ref{e2-00}) and
(\ref{e2-9}) that%
\begin{equation}
\mathbb{E}^{\ast}[\tilde{B}_{2^{-m}}-\tilde{B}_{0}]=\mathbb{E}^{\ast}%
[-(\tilde{B}_{2^{-m}}-\tilde{B}_{0})]=0\text{ for }m\geq0. \label{e2-12}%
\end{equation}
For each $(A,p)\in \mathbb{S}_{d}\times \mathbb{R}^{d}$, by (\ref{e2-01}),
(\ref{e2-3}) and (\ref{e2-8}), we get%
\begin{equation}%
\begin{array}
[c]{l}%
\mathbb{\bar{E}}\left[  \frac{1}{2}\langle A(\tilde{B}_{2^{-m}}-\tilde{B}%
_{0}),\tilde{B}_{2^{-m}}-\tilde{B}_{0}\rangle+\langle p,\bar{B}_{2^{-m}}%
-\bar{B}_{0}\rangle \right] \\
=\lim_{i\rightarrow \infty}\frac{[n_{i}^{m}2^{-m}]}{n_{i}^{m}}n_{i}%
^{m}\mathbb{\tilde{E}}\left[  \frac{1}{2}\langle AX_{n_{i}^{m},1},X_{n_{i}%
^{m},1}\rangle+\langle p,Y_{n_{i}^{m},1}\rangle \right] \\
=\tilde{G}(A,p)2^{-m}.
\end{array}
\label{e2-13}%
\end{equation}

Step 3: Viscosity solution of PDE (\ref{e2-0}).

For each $\phi \in C_{b.Lip}(\mathbb{R}^{2d})$, we define%
\begin{equation}
u(t,x,y):=\mathbb{\bar{E}}[\phi(x+\tilde{B}_{t}-\tilde{B}_{0},y+\bar{B}%
_{t}-\bar{B}_{0})]\text{ for }t\in \mathcal{D}\text{, }x\text{, }y\in
\mathbb{R}^{d}. \label{e2-14}%
\end{equation}
It is clear that $u(0,x,y)=\phi(x,y)$. Since $\mathbb{\bar{E}}[\cdot]$ is
dominated by $\mathbb{E}^{\ast}[\cdot]$, we get $|u(t,x,y)-u(t,x^{\prime
},y^{\prime})|\leq L_{\phi}(|x-x^{\prime}|+|y-y^{\prime}|)$. Noting that
$B_{t}-B_{s}$ is independent of $B_{s}-B_{0}$ under $\mathbb{\bar{E}}[\cdot]$
for $s$, $t\in \mathcal{D}$ with $s<t$, we deduce%
\begin{equation}
u(t,x,y)=\mathbb{\bar{E}}[u(t-s,x+\tilde{B}_{s}-\tilde{B}_{0},y+\bar{B}%
_{s}-\bar{B}_{0})]. \label{e2-15}%
\end{equation}
Then, for $s$, $t\in \mathcal{D}$ with $s<t$, we have%
\begin{equation}%
\begin{array}
[c]{rl}%
|u(t,x,y)-u(t-s,x,y)| & \leq L_{\phi}\mathbb{E}^{\ast}[|\tilde{B}_{s}%
-\tilde{B}_{0}|+|\bar{B}_{s}-\bar{B}_{0}|]\\
& \leq L_{\phi}\{(\mathbb{E}^{\ast}[|\tilde{B}_{s}-\tilde{B}_{0}|^{2}%
])^{1/2}+\mathbb{E}^{\ast}[|\bar{B}_{s}-\bar{B}_{0}|]\} \\
& \leq L_{\phi}\{(Cs)^{1/2}+Cs\},
\end{array}
\label{e2-16}%
\end{equation}
where the last inequality is due to (\ref{e2-11}) and (\ref{e2-12}). It is
follows from (\ref{e2-16}) that $u$ can be continuously extended to a function
on $[0,1]\times \mathbb{R}^{2d}$. From (\ref{e2-15}) and (\ref{e2-16}), we
obtain%
\begin{equation}
u(t,x,y)=\mathbb{\bar{E}}[u(t-2^{-m},x+\tilde{B}_{2^{-m}}-\tilde{B}_{0}%
,y+\bar{B}_{2^{-m}}-\bar{B}_{0})] \label{e2-17}%
\end{equation}
for $t\in(0,1]$, $m\geq1$ with $2^{-m}<t$, $x$, $y\in \mathbb{R}^{d}$.

We show that $u$ is a viscosity subsolution of PDE (\ref{e2-0}). For each
fixed $(t,x,y)\in(0,1]\times \mathbb{R}^{d}\times \mathbb{R}^{d}$, let
$\varphi \in C_{b}^{2,3}([0,1]\times \mathbb{R}^{2d})$ satisfy $\varphi \geq u$
and $\varphi(t,x,y)=u(t,x,y)$. Then, by (\ref{e2-17}), we get%
\[
\varphi(t,x,y)\leq \mathbb{\bar{E}}[\varphi(t-2^{-m},x+\tilde{B}_{2^{-m}%
}-\tilde{B}_{0},y+\bar{B}_{2^{-m}}-\bar{B}_{0})]
\]
for $m\geq1$ with $2^{-m}<t$, which implies%
\begin{equation}
0\leq \mathbb{\bar{E}}[\varphi(t-2^{-m},x+\tilde{B}_{2^{-m}}-\tilde{B}%
_{0},y+\bar{B}_{2^{-m}}-\bar{B}_{0})-\varphi(t,x,y)]. \label{e2-18}%
\end{equation}
It follows from Taylor's expansion that%
\begin{equation}
\varphi(t-2^{-m},x+\tilde{B}_{2^{-m}}-\tilde{B}_{0},y+\bar{B}_{2^{-m}}-\bar
{B}_{0})-\varphi(t,x,y)=I_{m}+J_{m}^{1}+J_{m}^{2}+J_{m}^{3}, \label{e2-19}%
\end{equation}
where%
\[%
\begin{array}
[c]{rl}%
I_{m}= & -\partial_{t}\varphi(t,x,y)2^{-m}+\langle D_{x}\varphi(t,x,y),\tilde
{B}_{2^{-m}}-\tilde{B}_{0}\rangle+\langle D_{y}\varphi(t,x,y),\bar{B}_{2^{-m}%
}-\bar{B}_{0}\rangle \\
& +\frac{1}{2}\langle D_{x}^{2}\varphi(t,x,y)(\tilde{B}_{2^{-m}}-\tilde{B}%
_{0}),\tilde{B}_{2^{-m}}-\tilde{B}_{0}\rangle,
\end{array}
\]%
\[
J_{m}^{1}=2^{-m}\int_{0}^{1}[\partial_{t}\varphi(t,x,y)-\partial_{t}%
\varphi(t-\alpha2^{-m},x+\tilde{B}_{2^{-m}}-\tilde{B}_{0},y+\bar{B}_{2^{-m}%
}-\bar{B}_{0})]d\alpha,
\]%
\[
J_{m}^{2}=\int_{0}^{1}\langle D_{y}\varphi(t,x+\tilde{B}_{2^{-m}}-\tilde
{B}_{0},y+\alpha(\bar{B}_{2^{-m}}-\bar{B}_{0}))-D_{y}\varphi(t,x,y),\bar
{B}_{2^{-m}}-\bar{B}_{0}\rangle d\alpha,
\]%
\[
J_{m}^{3}=\int_{0}^{1}\langle \lbrack D_{x}^{2}\varphi(t,x+\alpha \beta
(\tilde{B}_{2^{-m}}-\tilde{B}_{0}),y)-D_{x}^{2}\varphi(t,x,y)](\tilde
{B}_{2^{-m}}-\tilde{B}_{0}),\tilde{B}_{2^{-m}}-\tilde{B}_{0}\rangle \alpha
d\beta d\alpha.
\]
By Proposition \ref{pro-1}, (\ref{e2-12}) and (\ref{e2-13}), we have%
\[
\mathbb{\bar{E}}[I_{m}]=-\partial_{t}\varphi(t,x,y)2^{-m}+\tilde{G}(D_{x}%
^{2}\varphi(t,x,y),D_{y}\varphi(t,x,y))2^{-m}.
\]
Since $|\mathbb{\bar{E}}[\varphi(t-2^{-m},x+\tilde{B}_{2^{-m}}-\tilde{B}%
_{0},y+\bar{B}_{2^{-m}}-\bar{B}_{0})-\varphi(t,x,y)]-\mathbb{\bar{E}}%
[I_{m}]|\leq \sum_{k=1}^{3}\mathbb{E}^{\ast}[|J_{m}^{k}|]$, we obtain%
\begin{equation}
\partial_{t}\varphi(t,x,y)-\tilde{G}(D_{x}^{2}\varphi(t,x,y),D_{y}%
\varphi(t,x,y))\leq \sum_{k=1}^{3}2^{m}\mathbb{E}^{\ast}[|J_{m}^{k}|]
\label{e2-20}%
\end{equation}
by (\ref{e2-18}). Noting that $\varphi \in C_{b}^{2,3}([0,1]\times
\mathbb{R}^{2d})$, it is easy to verify that%
\begin{align*}
2^{m}|J_{m}^{1}|  &  \leq|\partial_{t}\varphi(t,x,y)-\partial_{t}%
\varphi(t,x+\tilde{B}_{2^{-m}}-\tilde{B}_{0},y+\bar{B}_{2^{-m}}-\bar{B}%
_{0})|+||\partial_{tt}^{2}\varphi||_{\infty}2^{-m}\\
&  \leq||\partial_{tt}^{2}\varphi||_{\infty}2^{-m/4}+2||\partial_{t}%
\varphi||_{\infty}2^{m/4}(|\tilde{B}_{2^{-m}}-\tilde{B}_{0}|+|\bar{B}_{2^{-m}%
}-\bar{B}_{0}|)+||\partial_{tt}^{2}\varphi||_{\infty}2^{-m}.
\end{align*}
Then, by (\ref{e2-11}), we obtain%
\[
2^{m}\mathbb{E}^{\ast}[|J_{m}^{1}|]\leq||\partial_{tt}^{2}\varphi||_{\infty
}2^{-m/4}+2||\partial_{t}\varphi||_{\infty}2^{m/4}(\sqrt{C2^{-m}}%
+C2^{-m})+||\partial_{tt}^{2}\varphi||_{\infty}2^{-m},
\]
which implies $\lim_{m\rightarrow \infty}2^{m}\mathbb{E}^{\ast}[|J_{m}^{1}%
|]=0$. Noting that $|\bar{B}_{2^{-m}}-\bar{B}_{0}|\leq N2^{-m}+(|\bar
{B}_{2^{-m}}-\bar{B}_{0}|-N2^{-m})^{+}$ for each $N>0$, it is easy to verify
that%
\begin{align*}
|J_{m}^{2}|  &  \leq N2^{-m}(||D_{yx}^{2}\varphi||_{\infty}|\tilde{B}_{2^{-m}%
}-\tilde{B}_{0}|+||D_{yy}^{2}\varphi||_{\infty}|\bar{B}_{2^{-m}}-\bar{B}%
_{0}|)+2||D_{y}\varphi||_{\infty}(|\bar{B}_{2^{-m}}-\bar{B}_{0}|-N2^{-m}%
)^{+}\\
&  \leq C_{1}\{N2^{-m}(|\tilde{B}_{2^{-m}}-\tilde{B}_{0}|+|\bar{B}_{2^{-m}%
}-\bar{B}_{0}|)+(|\bar{B}_{2^{-m}}-\bar{B}_{0}|-N2^{-m})^{+}\},
\end{align*}
where $C_{1}>0$ is a constant independent of $m$ and $N$. It follows from
(\ref{e2-11}) that%
\[
\underset{m\rightarrow \infty}{\lim \sup}2^{m}\mathbb{E}^{\ast}[|J_{m}^{2}%
|]\leq \sup_{m\geq0}2^{m}C_{1}\mathbb{E}^{\ast}[(|\bar{B}_{2^{-m}}-\bar{B}%
_{0}|-N2^{-m})^{+}],
\]
which implies $\lim_{m\rightarrow \infty}2^{m}\mathbb{E}^{\ast}[|J_{m}^{2}|]=0$
by (\ref{e2-10}). Similarly, we can prove that $\lim_{m\rightarrow \infty}%
2^{m}\mathbb{E}^{\ast}[|J_{m}^{3}|]=0$. Thus we obtain%
\[
\partial_{t}\varphi(t,x,y)-\tilde{G}(D_{x}^{2}\varphi(t,x,y),D_{y}%
\varphi(t,x,y))\leq0
\]
by taking $m\rightarrow \infty$ in (\ref{e2-20}), which implies that $u$ is a
viscosity subsolution of PDE (\ref{e2-0}). Similarly, we can show that $u$ is
a viscosity supersolution of PDE (\ref{e2-0}). Thus $u$ is a viscosity
solution of PDE (\ref{e2-0}).

Step 4: Uniqueness of the limit.

We assert that (\ref{e2-1}) holds true. Otherwise, there exist a
$\varepsilon_{0}>0$ and a subsequence $\{n_{i}\}_{i=1}^{\infty}$ of
$\{n\}_{n=1}^{\infty}$ such that%
\begin{equation}
\left \vert \mathbb{\tilde{E}}\left[  \phi \left(  \sum_{k=1}^{n_{i}}X_{n_{i}%
,k},\sum_{k=1}^{n_{i}}Y_{n_{i},k}\right)  \right]  -u^{\phi}(1,0,0)\right \vert
\geq \varepsilon_{0}. \label{e2-21}%
\end{equation}
By Steps 1-3, there exists a subsequence $\{n_{i}^{0}\}_{i=1}^{\infty}$ of
$\{n_{i}\}_{i=1}^{\infty}$ such that%
\[
\lim_{i\rightarrow \infty}\mathbb{\tilde{E}}\left[  \phi \left(  \sum
_{k=1}^{n_{i}^{0}}X_{n_{i}^{0},k},\sum_{k=1}^{n_{i}^{0}}Y_{n_{i}^{0}%
,k}\right)  \right]  =u^{\phi}(1,0,0),
\]
which contradicts (\ref{e2-21}). Thus we obtain (\ref{e2-1}). $\Box$

\begin{remark}
\label{re-11}Generally speaking, we suppose that $X_{n,1}\overset{d}{=}%
\frac{1}{\sqrt{n}}X_{1,1}$ and $Y_{n,1}\overset{d}{=}\frac{1}{n}Y_{1,1}$ under
$\mathbb{\hat{E}}[\cdot]$ in the assumptions of Theorem \ref{th-8}. In this
situation, (\ref{e2-00}) is modified as follows:%
\begin{equation}
\mathbb{\hat{E}}[X_{1,1}]=\mathbb{\hat{E}}[-X_{1,1}]=0\text{ and }%
\lim_{N\rightarrow \infty}\mathbb{\hat{E}}\left[  (|X_{1,1}|^{2}-N)^{+}%
+(|Y_{1,1}|-N)^{+}\right]  =0. \label{e2-22}%
\end{equation}

\end{remark}

\begin{corollary}
\label{cor-1}Let a nonlinear expectation $\mathbb{\tilde{E}}[\cdot]$ be
dominated by a sublinear expectation $\mathbb{\hat{E}}[\cdot]$ on
$(\Omega,\mathcal{H})$, and let $\{(X_{n},Y_{n})\}_{n=1}^{\infty}$ be a
sequence of $2d$-dimensional random vectors such that $(X_{n+1},Y_{n+1}%
)\overset{d}{=}(X_{1},Y_{1})$ and $(X_{n+1},Y_{n+1})$ is independent of
$(X_{1}$,$\ldots$,$X_{n}$,$Y_{1}$,$\ldots$,$Y_{n})$ for $n\geq1$ under
$\mathbb{\tilde{E}}[\cdot]$ and $\mathbb{\hat{E}}[\cdot]$ respectively.
Suppose that%
\[
\mathbb{\hat{E}}[X_{1}]=\mathbb{\hat{E}}[-X_{1}]=0\text{ and }\lim
_{N\rightarrow \infty}\mathbb{\hat{E}}\left[  (|X_{1}|^{2}-N)^{+}%
+(|Y_{1}|-N)^{+}\right]  =0.
\]
Then, for each $\phi \in C_{b.Lip}(\mathbb{R}^{2d})$, we have%
\begin{equation}
\lim_{n\rightarrow \infty}\frac{1}{n}\mathbb{\tilde{E}}\left[  n\phi \left(
\frac{1}{\sqrt{n}}\sum_{i=1}^{n}X_{i},\frac{1}{n}\sum_{i=1}^{n}Y_{i}\right)
\right]  =u^{\phi}(1,0,0),\label{e2-23}%
\end{equation}
where $u^{\phi}$ is the unique viscosity solution of the following PDE:%
\begin{equation}
\partial_{t}u-\tilde{G}(D_{x}^{2}u,D_{y}u)=0\text{, }u(0,x,y)=\phi
(x,y),\label{e2-25}%
\end{equation}
and%
\begin{equation}
\tilde{G}(A,p):=\mathbb{\tilde{E}}\left[  \frac{1}{2}\langle AX_{1}%
,X_{1}\rangle+\langle p,Y_{1}\rangle \right]  \text{ for }(A,p)\in
\mathbb{S}_{d}\times \mathbb{R}^{d}.\label{e2-24}%
\end{equation}

\end{corollary}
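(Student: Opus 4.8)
The plan is to reduce Corollary \ref{cor-1} to Theorem \ref{th-8} applied to the rescaled triangular array $(X_{n,i},Y_{n,i}):=(\frac{1}{\sqrt{n}}X_{i},\frac{1}{n}Y_{i})$, $i=1,\ldots,n$. I would realize these on a single (product) space carrying one independent copy of the original sequence $\{(X_{k},Y_{k})\}_{k}$ for each row $n$, and on it define the nonlinear expectation $\mathbb{\tilde{E}}_{1}[\cdot]$ determined, on functions of the $n$-th row, by
\[
\mathbb{\tilde{E}}_{1}[\Phi(X_{n,1},\ldots,X_{n,n},Y_{n,1},\ldots,Y_{n,n})]=\frac{1}{n}\mathbb{\tilde{E}}\left[n\Phi\left(\frac{1}{\sqrt{n}}X_{1},\ldots,\frac{1}{\sqrt{n}}X_{n},\frac{1}{n}Y_{1},\ldots,\frac{1}{n}Y_{n}\right)\right],\quad\Phi\in C_{b.Lip}(\mathbb{R}^{2nd}),
\]
with the different rows declared independent, together with the sublinear expectation $\mathbb{\hat{E}}_{1}[\cdot]$ given by the same formula with $\mathbb{\tilde{E}}$ replaced by $\mathbb{\hat{E}}$; positive homogeneity of $\mathbb{\hat{E}}$ makes $\mathbb{\hat{E}}_{1}$ simply the joint law of the rescaled variables under $\mathbb{\hat{E}}$. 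Monotonicity and $\mathbb{\tilde{E}}_{1}[c]=c$ are immediate, dividing the domination inequality (\ref{e1-1}) for $\mathbb{\tilde{E}}$ by $n$ shows that $\mathbb{\tilde{E}}_{1}$ is dominated by $\mathbb{\hat{E}}_{1}$, and $\mathbb{\hat{E}}_{1}[X_{n,1}]=\mathbb{\hat{E}}_{1}[-X_{n,1}]=\frac{1}{\sqrt{n}}\mathbb{\hat{E}}[\pm X_{1}]=0$.

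Next I would check the remaining hypotheses of Theorem \ref{th-8}. Within-row identical distribution under $\mathbb{\tilde{E}}_{1}$ follows from $(X_{i},Y_{i})\overset{d}{=}(X_{1},Y_{1})$ under $\mathbb{\tilde{E}}$ (tested against the bounded Lipschitz function $n\phi(\frac{1}{\sqrt{n}}\cdot,\frac{1}{n}\cdot)$), and under $\mathbb{\hat{E}}_{1}$ it is built in. The key point is independence of $(X_{n,i},Y_{n,i})$ from $(X_{n,1},\ldots,X_{n,i-1},Y_{n,1},\ldots,Y_{n,i-1})$ under $\mathbb{\tilde{E}}_{1}$: unfolding both sides of the independence identity via the defining formula and using that $(X_{i},Y_{i})$ is independent of $(X_{1},\ldots,X_{i-1},Y_{1},\ldots,Y_{i-1})$ under $\mathbb{\tilde{E}}$, one sees that the prefactor $n$ and the deterministic scalings pass through the nested expectations unchanged, precisely because every coordinate is rescaled in the same consistent way. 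The uniform integrability condition (\ref{e2-00}) is automatic and $n$-free: since $n|X_{n,1}|^{2}$ and $n|Y_{n,1}|$ have, under $\mathbb{\hat{E}}_{1}$, the law of $|X_{1}|^{2}$ and $|Y_{1}|$, one gets $\sup_{n}\mathbb{\hat{E}}_{1}[(n|X_{n,1}|^{2}-N)^{+}+(n|Y_{n,1}|-N)^{+}]=\mathbb{\hat{E}}[(|X_{1}|^{2}-N)^{+}+(|Y_{1}|-N)^{+}]\rightarrow0$. Finally, the same consistent scaling gives $\frac{1}{2}\langle AX_{n,1},X_{n,1}\rangle+\langle p,Y_{n,1}\rangle=\frac{1}{n}(\frac{1}{2}\langle AX_{1},X_{1}\rangle+\langle p,Y_{1}\rangle)$ in law, and $|X_{1}|^{2},|Y_{1}|\in L^{1}(\Omega)$ by the uniform integrability assumption, so $n\mathbb{\tilde{E}}_{1}[\frac{1}{2}\langle AX_{n,1},X_{n,1}\rangle+\langle p,Y_{n,1}\rangle]=\mathbb{\tilde{E}}[\frac{1}{2}\langle AX_{1},X_{1}\rangle+\langle p,Y_{1}\rangle]$ for every $n$; being constant in $n$, this limit exists, so (\ref{e2-01}) holds with $\tilde{G}$ as in (\ref{e2-24}).

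With the hypotheses in place, Theorem \ref{th-8} gives $\lim_{n\rightarrow\infty}\mathbb{\tilde{E}}_{1}[\phi(\sum_{i=1}^{n}X_{n,i},\sum_{i=1}^{n}Y_{n,i})]=u^{\phi}(1,0,0)$, where $u^{\phi}$ is the unique viscosity solution of (\ref{e2-25}) with $\tilde{G}$ from (\ref{e2-24}). Taking $\Phi(x_{1},\ldots,x_{n},y_{1},\ldots,y_{n})=\phi(\sum_{i=1}^{n}x_{i},\sum_{i=1}^{n}y_{i})$ in the defining formula yields $\mathbb{\tilde{E}}_{1}[\phi(\sum_{i=1}^{n}X_{n,i},\sum_{i=1}^{n}Y_{n,i})]=\frac{1}{n}\mathbb{\tilde{E}}[n\phi(\frac{1}{\sqrt{n}}\sum_{i=1}^{n}X_{i},\frac{1}{n}\sum_{i=1}^{n}Y_{i})]$, which is exactly the left-hand side of (\ref{e2-23}); this proves the corollary. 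I expect the main obstacle to be the first step: producing a single space that carries all the rows together with a bona fide nonlinear expectation $\mathbb{\tilde{E}}_{1}$ of the prescribed form, and especially verifying that within-row independence genuinely survives under the nonlinear $\mathbb{\tilde{E}}_{1}$ — the whole reduction works only because the multiplicative factor $n$ is exactly matched to the $(\frac{1}{\sqrt{n}},\frac{1}{n})$ scaling.
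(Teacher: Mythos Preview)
Your proposal is correct and follows essentially the same approach as the paper: define the new nonlinear expectation $\mathbb{\tilde{E}}_{1}$ on a product space by the formula $\mathbb{\tilde{E}}_{1}[\Phi(X_{n,1},\ldots,Y_{n,n})]=\frac{1}{n}\mathbb{\tilde{E}}[n\Phi(\frac{1}{\sqrt{n}}X_{1},\ldots,\frac{1}{n}Y_{n})]$, check the hypotheses of Theorem \ref{th-8} (in particular that $n\mathbb{\tilde{E}}_{1}[\frac{1}{2}\langle AX_{n,1},X_{n,1}\rangle+\langle p,Y_{n,1}\rangle]$ is constant in $n$ and equals $\tilde{G}(A,p)$), and read off (\ref{e2-23}). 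The paper handles the product-space construction you flagged as the main obstacle by simply citing Proposition 1.3.17 in \cite{P2019}; your more explicit verification that within-row independence survives under $\mathbb{\tilde{E}}_{1}$ is correct and is exactly the computation one would do to justify that citation.
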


\begin{proof}
By using the product space technique (see Proposition 1.3.17 in \cite{P2019}),
we can construct a sequence of $2d$-dimensional random vectors $\{(X_{n,i}%
,Y_{n,i}):n\in \mathbb{N}$, $i=1$,$\ldots$,$n\}$ under a nonlinear expectation
$\mathbb{\tilde{E}}_{1}[\cdot]$ and a sublinear expectation $\mathbb{\hat{E}%
}_{1}[\cdot]$ satisfying%
\[
\mathbb{\tilde{E}}_{1}[\Phi(X_{n,1},\ldots,X_{n,n},Y_{n,1},\ldots
,Y_{n,n})]=\frac{1}{n}\mathbb{\tilde{E}}\left[  n\Phi \left(  \frac{1}{\sqrt
{n}}X_{1},\ldots,\frac{1}{\sqrt{n}}X_{n},\frac{1}{n}Y_{1},\ldots,\frac{1}%
{n}Y_{n}\right)  \right]
\]
and%
\[
\mathbb{\hat{E}}_{1}[\Phi(X_{n,1},\ldots,X_{n,n},Y_{n,1},\ldots,Y_{n,n}%
)]=\mathbb{\hat{E}}\left[  \Phi \left(  \frac{1}{\sqrt{n}}X_{1},\ldots,\frac
{1}{\sqrt{n}}X_{n},\frac{1}{n}Y_{1},\ldots,\frac{1}{n}Y_{n}\right)  \right]
\]
for each $\Phi \in C_{b.Lip}(\mathbb{R}^{2nd})$. It is easy to check that
$\mathbb{\tilde{E}}_{1}[\cdot]$ is dominated by $\mathbb{\hat{E}}_{1}[\cdot]$,
$(X_{n,i},Y_{n,i})\overset{d}{=}(X_{n,1},Y_{n,1})$ and $(X_{n,i},Y_{n,i})$ is
independent of $(X_{n,1}$,$\ldots$,$X_{n,i-1}$,$Y_{n,1}$,$\ldots$,$Y_{n,i-1})$
for $n\in \mathbb{N}$, $i=2$,$\ldots$,$n$ under $\mathbb{\tilde{E}}_{1}[\cdot]$
and $\mathbb{\hat{E}}_{1}[\cdot]$ respectively. Since $X_{n,1}\overset{d}%
{=}\frac{1}{\sqrt{n}}X_{1,1}$ under $\mathbb{\hat{E}}_{1}[\cdot]$, we obtain
(\ref{e2-22}). For each given $(A,p)\in \mathbb{S}_{d}\times \mathbb{R}^{d}$,
\[
n\mathbb{\tilde{E}}_{1}\left[  \frac{1}{2}\langle AX_{n,1},X_{n,1}%
\rangle+\langle p,Y_{n,1}\rangle \right]  =\mathbb{\tilde{E}}\left[  \frac
{1}{2}\langle AX_{1},X_{1}\rangle+\langle p,Y_{1}\rangle \right]  ,
\]
which implies (\ref{e2-24}). Note that%
\[
\mathbb{\tilde{E}}_{1}\left[  \phi \left(  \sum_{i=1}^{n}X_{n,i},\sum_{i=1}%
^{n}Y_{n,i}\right)  \right]  =\frac{1}{n}\mathbb{\tilde{E}}\left[
n\phi \left(  \frac{1}{\sqrt{n}}\sum_{i=1}^{n}X_{i},\frac{1}{n}\sum_{i=1}%
^{n}Y_{i}\right)  \right]  .
\]
Then we obtain (\ref{e2-23}) by Theorem \ref{th-8}.
\end{proof}

\begin{remark}
\label{re-new-1}When $\mathbb{\tilde{E}}[\cdot]$ is a convex expectation,
(\ref{e2-23}) was first obtained in \cite{BK}.
\end{remark}

Now we give an example to apply Theorem \ref{th-8}. We only need to provide
the distribution of $(X_{n,1},Y_{n,1})$ under $\mathbb{\tilde{E}}[\cdot]$ and
$\mathbb{\hat{E}}[\cdot]$ respectively. Because the existence of
$\{(X_{n,i},Y_{n,i}):n\in \mathbb{N}$, $i=1$,$\ldots$,$n\}$ can be obtained by
constructing the product space.

\begin{example}
\label{ex-1}Let $\{X_{n,i}:n\in \mathbb{N}$, $i=1$,$\ldots$,$n\}$ be a sequence
of $d$-dimensional random vectors and let $\mathcal{P}$ be a given weakly
compact and convex set of probability measures on $(\mathbb{R}^{d}%
,\mathcal{B}(\mathbb{R}^{d}))$ satisfying%
\[
\int_{\mathbb{R}^{d}}xdP=0\text{ for each }P\in \mathcal{P}\text{ and }%
\lim_{N\rightarrow \infty}\sup_{P\in \mathcal{P}}\int_{\mathbb{R}^{d}}%
(|x|^{2}-N)^{+}dP=0.
\]
Define%
\[
\mathbb{\hat{E}}[\phi(X_{n,1})]=\sup_{P\in \mathcal{P}}\int_{\mathbb{R}^{d}%
}\phi(x/\sqrt{n})dP\text{ for }n\geq1\text{ and }\phi \in C_{b.Lip}%
(\mathbb{R}^{d}).
\]
It is easy to verify that $X_{n,1}\overset{d}{=}\frac{1}{\sqrt{n}}X_{1,1}$
under $\mathbb{\hat{E}}[\cdot]$ and (\ref{e2-22}) holds true. Let
$\rho:\mathcal{P}\rightarrow \lbrack0,\infty)$ be a given convex function such
that $\{P\in \mathcal{P}:\rho(P)=0\}$ is nonempty. Define%
\[
\mathbb{\tilde{E}}[\phi(X_{n,1})]=\sup_{P\in \mathcal{P}}\left \{
\int_{\mathbb{R}^{d}}\phi(x/\sqrt{n})dP-\frac{1}{n}\rho(P)\right \}  \text{ for
}n\geq1\text{ and }\phi \in C_{b.Lip}(\mathbb{R}^{d}).
\]
It is easy to check that $\mathbb{\tilde{E}}[\cdot]$ is a convex expectation
and $\mathbb{\tilde{E}}[\cdot]$ is dominated by $\mathbb{\hat{E}}[\cdot]$. For
each $A\in \mathbb{S}_{d}$,
\[
n\mathbb{\tilde{E}}\left[  \frac{1}{2}\langle AX_{n,1},X_{n,1}\rangle \right]
=\sup_{P\in \mathcal{P}}\left \{  \int_{\mathbb{R}^{d}}\frac{1}{2}\langle
Ax,x\rangle dP-\rho(P)\right \}  ,
\]
which implies%
\[
\tilde{G}(A)=\sup_{P\in \mathcal{P}}\left \{  \int_{\mathbb{R}^{d}}\frac{1}%
{2}\langle Ax,x\rangle dP-\rho(P)\right \}  \text{ for }A\in \mathbb{S}_{d}.
\]
Thus, by Theorem \ref{th-8}, we obtain%
\[
\lim_{n\rightarrow \infty}\mathbb{\tilde{E}}\left[  \phi \left(  \sum_{i=1}%
^{n}X_{n,i}\right)  \right]  =\tilde{u}^{\phi}(1,0)\text{ for }\phi \in
C_{b.Lip}(\mathbb{R}^{d}),
\]
where $\tilde{u}^{\phi}$ is the unique viscosity solution of the following
PDE:%
\[
\partial_{t}u-\tilde{G}(D^{2}u)=0\text{, }u(0,x)=\phi(x).
\]

Let $\{ \rho_{\alpha}:\alpha \in I\}$ be a given family of nonnegative convex
functions on $\mathcal{P}$ such that $\{P\in \mathcal{P}:\rho_{\alpha}(P)=0\}$
is nonempty for each $\alpha \in I$. Define%
\[
\mathbb{\bar{E}}[\phi(X_{n,1})]=\inf_{\alpha \in I}\sup_{P\in \mathcal{P}%
}\left \{  \int_{\mathbb{R}^{d}}\phi(x/\sqrt{n})dP-\frac{1}{n}\rho_{\alpha
}(P)\right \}  \text{ for }n\geq1\text{ and }\phi \in C_{b.Lip}(\mathbb{R}%
^{d}).
\]
It is easy to check that $\mathbb{\bar{E}}[\cdot]$ is a nonlinear expectation
and $\mathbb{\bar{E}}[\cdot]$ is dominated by $\mathbb{\hat{E}}[\cdot]$. For
each $A\in \mathbb{S}_{d}$,
\[
n\mathbb{\bar{E}}\left[  \frac{1}{2}\langle AX_{n,1},X_{n,1}\rangle \right]
=\inf_{\alpha \in I}\sup_{P\in \mathcal{P}}\left \{  \int_{\mathbb{R}^{d}}%
\frac{1}{2}\langle Ax,x\rangle dP-\rho_{\alpha}(P)\right \}  ,
\]
which implies%
\[
\bar{G}(A)=\inf_{\alpha \in I}\sup_{P\in \mathcal{P}}\left \{  \int
_{\mathbb{R}^{d}}\frac{1}{2}\langle Ax,x\rangle dP-\rho_{\alpha}(P)\right \}
\text{ for }A\in \mathbb{S}_{d}.
\]
Thus, by Theorem \ref{th-8}, we obtain%
\[
\lim_{n\rightarrow \infty}\mathbb{\bar{E}}\left[  \phi \left(  \sum_{i=1}%
^{n}X_{n,i}\right)  \right]  =\bar{u}^{\phi}(1,0)\text{ for }\phi \in
C_{b.Lip}(\mathbb{R}^{d}),
\]
where $\bar{u}^{\phi}$ is the unique viscosity solution of the following PDE:%
\[
\partial_{t}u-\bar{G}(D^{2}u)=0\text{, }u(0,x)=\phi(x).
\]

\end{example}

\section{A special case}

Let $\{(X_{n},Y_{n})\}_{n=1}^{\infty}$ be a sequence of $2d$-dimensional
random vectors. In this section, we consider the following special case:%
\begin{equation}
X_{n,i}=\frac{1}{\sqrt{n}}X_{i}\text{ and }Y_{n,i}=\frac{1}{n}Y_{i}\text{ for
}i=1,\ldots,n. \label{e3-0}%
\end{equation}
According to Theorem \ref{th-8}, we have the following theorem.

\begin{theorem}
\label{th-12}Let a nonlinear expectation $\mathbb{\tilde{E}}[\cdot]$ be
dominated by a sublinear expectation $\mathbb{\hat{E}}[\cdot]$ on
$(\Omega,\mathcal{H})$, and let $\{(X_{n},Y_{n})\}_{n=1}^{\infty}$ be a
sequence of $2d$-dimensional random vectors such that $(X_{n+1},Y_{n+1}%
)\overset{d}{=}(X_{1},Y_{1})$ and $(X_{n+1},Y_{n+1})$ is independent of
$(X_{1}$,$\ldots$,$X_{n}$,$Y_{1}$,$\ldots$,$Y_{n})$ for $n\geq1$ under
$\mathbb{\tilde{E}}[\cdot]$ and $\mathbb{\hat{E}}[\cdot]$ respectively.
Suppose that%
\begin{equation}
\mathbb{\hat{E}}[X_{1}]=\mathbb{\hat{E}}[-X_{1}]=0\text{ and }\lim
_{N\rightarrow \infty}\mathbb{\hat{E}}\left[  (|X_{1}|^{2}-N)^{+}%
+(|Y_{1}|-N)^{+}\right]  =0, \label{e3-1}%
\end{equation}
and for each given $(A,p)\in \mathbb{S}_{d}\times \mathbb{R}^{d}$,%
\begin{equation}
\tilde{G}(A,p):=\lim_{n\rightarrow \infty}n\mathbb{\tilde{E}}\left[  \frac
{1}{n}\left(  \frac{1}{2}\langle AX_{1},X_{1}\rangle+\langle p,Y_{1}%
\rangle \right)  \right]  \text{ exists.} \label{e3-2}%
\end{equation}
Then for each $\phi \in C_{b.Lip}(\mathbb{R}^{2d})$, we have%
\begin{equation}
\lim_{n\rightarrow \infty}\mathbb{\tilde{E}}\left[  \phi \left(  \frac{1}%
{\sqrt{n}}\sum_{i=1}^{n}X_{i},\frac{1}{n}\sum_{i=1}^{n}Y_{i}\right)  \right]
=u^{\phi}(1,0,0), \label{e3-3}%
\end{equation}
where $u^{\phi}$ is the unique viscosity solution of the following PDE:%
\begin{equation}
\partial_{t}u-\tilde{G}(D_{x}^{2}u,D_{y}u)=0\text{, }u(0,x,y)=\phi(x,y).
\label{e3-4}%
\end{equation}

\end{theorem}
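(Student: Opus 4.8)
The plan is to deduce Theorem \ref{th-12} directly from Theorem \ref{th-8} by taking the double-indexed array to be the given i.i.d. sequence rescaled as in (\ref{e3-0}). First I would set $X_{n,i}:=\frac{1}{\sqrt{n}}X_{i}$ and $Y_{n,i}:=\frac{1}{n}Y_{i}$ for $i=1,\ldots,n$, working on the original space $(\Omega,\mathcal{H})$ with the same pair $\mathbb{\tilde{E}}[\cdot]$, $\mathbb{\hat{E}}[\cdot]$. Since $\{(X_{n},Y_{n})\}_{n=1}^{\infty}$ is i.i.d. under both $\mathbb{\tilde{E}}[\cdot]$ and $\mathbb{\hat{E}}[\cdot]$, and distribution and independence are preserved under composition with the fixed linear map $(x,y)\mapsto(x/\sqrt{n},y/n)$, the array $\{(X_{n,i},Y_{n,i})\}$ satisfies $(X_{n,i},Y_{n,i})\overset{d}{=}(X_{n,1},Y_{n,1})$ and the required independence structure under both expectations, so the structural hypotheses of Theorem \ref{th-8} hold.

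Next I would verify the three analytic conditions. For the centering, $X_{n,1}=\frac{1}{\sqrt{n}}X_{1}$ together with positive homogeneity of $\mathbb{\hat{E}}[\cdot]$ gives $\mathbb{\hat{E}}[X_{n,1}]=\frac{1}{\sqrt{n}}\mathbb{\hat{E}}[X_{1}]=0$ and likewise $\mathbb{\hat{E}}[-X_{n,1}]=0$. For the uniform integrability condition in (\ref{e2-00}), note that $n|X_{n,1}|^{2}=|X_{1}|^{2}$ and $n|Y_{n,1}|=|Y_{1}|$, hence $\sup_{n\geq1}\mathbb{\hat{E}}[(n|X_{n,1}|^{2}-N)^{+}+(n|Y_{n,1}|-N)^{+}]=\mathbb{\hat{E}}[(|X_{1}|^{2}-N)^{+}+(|Y_{1}|-N)^{+}]\rightarrow0$ as $N\rightarrow\infty$ by (\ref{e3-1}). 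Finally, since $\langle AX_{n,1},X_{n,1}\rangle=\frac{1}{n}\langle AX_{1},X_{1}\rangle$ and $\langle p,Y_{n,1}\rangle=\frac{1}{n}\langle p,Y_{1}\rangle$, the limit defining $\tilde{G}$ in (\ref{e2-01}) coincides termwise with the limit in (\ref{e3-2}), which is assumed to exist; thus (\ref{e2-01}) holds with the same $\tilde{G}$, and hence PDE (\ref{e2-0}) is identical to PDE (\ref{e3-4}).

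It then remains only to identify the conclusions: $\sum_{i=1}^{n}X_{n,i}=\frac{1}{\sqrt{n}}\sum_{i=1}^{n}X_{i}$ and $\sum_{i=1}^{n}Y_{n,i}=\frac{1}{n}\sum_{i=1}^{n}Y_{i}$, so (\ref{e2-1}) reads exactly as (\ref{e3-3}), and applying Theorem \ref{th-8} finishes the proof. There is essentially no obstacle here beyond bookkeeping; the only point deserving a sentence of care is that the rescaled array must inherit the i.i.d.-type structure under $\mathbb{\tilde{E}}[\cdot]$ and $\mathbb{\hat{E}}[\cdot]$ simultaneously, which holds because the rescaling map does not depend on $i$. (Alternatively, the same array can be realized on a product space as in the proof of Corollary \ref{cor-1}, but here this is unnecessary since the rescaling is applied uniformly across $i$ on the original space.)
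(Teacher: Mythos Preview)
Your proposal is correct and follows exactly the paper's own approach: the paper likewise sets $(X_{n,i},Y_{n,i})$ as in (\ref{e3-0}) and simply says that the array then satisfies the hypotheses of Theorem \ref{th-8}, from which (\ref{e3-3}) follows. You have merely spelled out the ``easy to verify'' step in more detail than the paper does.
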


\begin{proof}
Set $(X_{n,i},Y_{n,i})$ as in (\ref{e3-0}) for $i=1,\ldots,n$. It is easy to
verify that $\{(X_{n,i},Y_{n,i}):n\in \mathbb{N}$, $i=1$,$\ldots$,$n\}$
satisfies the assumptions in Theorem \ref{th-8}. Thus we obtain (\ref{e3-3})
by Theorem \ref{th-8}.
\end{proof}

The following proposition shows that (\ref{e3-2}) holds true if
$\mathbb{\tilde{E}}[\cdot]$ is a convex expectation.

\begin{proposition}
\label{pro-13}Let a convex expectation $\mathbb{\tilde{E}}[\cdot]$ be
dominated by a sublinear expectation $\mathbb{\hat{E}}[\cdot]$ on
$(\Omega,\mathcal{H})$, and let $(X_{1},Y_{1})$ be a $2d$-dimensional random
vector satisfying%
\begin{equation}
\lim_{N\rightarrow \infty}\mathbb{\hat{E}}\left[  (|X_{1}|^{2}-N)^{+}%
+(|Y_{1}|-N)^{+}\right]  =0. \label{e3-5}%
\end{equation}
Then $\lim_{n\rightarrow \infty}n\mathbb{\tilde{E}}\left[  \frac{1}{n}(\frac
{1}{2}\langle AX_{1},X_{1}\rangle+\langle p,Y_{1}\rangle)\right]  $ exists for
each $(A,p)\in \mathbb{S}_{d}\times \mathbb{R}^{d}$.
\end{proposition}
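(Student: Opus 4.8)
The plan is to reduce the statement to an elementary fact about convex functions of one real variable. Write $Z:=\frac{1}{2}\langle AX_{1},X_{1}\rangle+\langle p,Y_{1}\rangle$. The first step is to check that $Z\in L^{1}(\Omega)$: since $|\langle AX_{1},X_{1}\rangle|\leq|A|\,|X_{1}|^{2}$ and, by (\ref{e3-5}), $\mathbb{\hat{E}}[|X_{1}|^{2}]\leq N+\mathbb{\hat{E}}[(|X_{1}|^{2}-N)^{+}]<\infty$ for $N$ large (and likewise $\mathbb{\hat{E}}[|Y_{1}|]<\infty$), we get $Z\in L^{1}(\Omega)$. Hence the continuous extensions of $\mathbb{\hat{E}}[\cdot]$ and $\mathbb{\tilde{E}}[\cdot]$ to $L^{1}(\Omega)$ are legitimately evaluated at $tZ$ for every $t\geq0$, and the domination inequality (\ref{e1-1}) still holds there.

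Next, define $g(t):=\mathbb{\tilde{E}}[tZ]$ for $t\in[0,1]$. Then $g(0)=\mathbb{\tilde{E}}[0]=0$, and since $tZ=\rho(t_{1}Z)+(1-\rho)(t_{2}Z)$ whenever $t=\rho t_{1}+(1-\rho)t_{2}$, the convexity of $\mathbb{\tilde{E}}[\cdot]$ shows $g$ is convex on $[0,1]$. The key elementary observation is that for a convex $g$ with $g(0)=0$ the difference quotient $g(t)/t$ is nondecreasing on $(0,1]$: for $0<s<t\leq1$, writing $s=\tfrac{s}{t}t+(1-\tfrac{s}{t})\cdot0$ and applying convexity gives $g(s)\leq\tfrac{s}{t}g(t)$, i.e. $g(s)/s\leq g(t)/t$. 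Consequently $\lim_{t\downarrow0}g(t)/t$ exists and equals $\inf_{0<t\leq1}g(t)/t$, a priori in $[-\infty,\infty)$. Taking $t=1/n$ gives $n\mathbb{\tilde{E}}[\tfrac{1}{n}Z]=g(1/n)/(1/n)$, so this sequence converges.

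It remains to exclude the value $-\infty$, which is where domination is used. For each $t>0$, applying (\ref{e1-1}) with $(\xi_{1},\xi_{2})=(tZ,0)$ and then with $(\xi_{1},\xi_{2})=(0,tZ)$, together with positive homogeneity of $\mathbb{\hat{E}}[\cdot]$, yields
\[
-t\,\mathbb{\hat{E}}[-Z]\ \leq\ g(t)\ \leq\ t\,\mathbb{\hat{E}}[Z],
\]
so $-\mathbb{\hat{E}}[-Z]\leq g(t)/t\leq\mathbb{\hat{E}}[Z]$ for all $t\in(0,1]$. Hence the monotone limit is finite, lying in $[-\mathbb{\hat{E}}[-Z],\ \mathbb{\hat{E}}[Z]]$, and the proof is complete. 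I do not expect a genuine obstacle here; the only points requiring mild care are the preliminary verification that $Z\in L^{1}(\Omega)$ (so that the extended $\mathbb{\tilde{E}}[\cdot]$ is being applied legitimately) and the invocation of domination in the last step to guarantee that the limit is finite rather than $-\infty$.
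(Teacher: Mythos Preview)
Your proof is correct and follows essentially the same idea as the paper's own argument: both reduce the claim to the existence of the right derivative at $0$ of the convex function $t\mapsto\mathbb{\tilde{E}}[tZ]$. The paper defines this function on all of $\mathbb{R}$, so that finiteness of $f_{+}^{\prime}(0)$ is automatic from the existence of a two-sided neighbourhood, whereas you work on $[0,1]$ and instead invoke domination to bound the quotient below; these are equivalent ways of handling the same point, and your version is slightly more explicit about the integrability check and the monotonicity of $g(t)/t$.
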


\begin{proof}
For each fixed $(A,p)\in \mathbb{S}_{d}\times \mathbb{R}^{d}$, we have $\langle
AX_{1},X_{1}\rangle+\langle p,Y_{1}\rangle \in L^{1}(\Omega)$ by (\ref{e3-5})
and define%
\[
f(t)=\mathbb{\tilde{E}}\left[  t\left(  \frac{1}{2}\langle AX_{1},X_{1}%
\rangle+\langle p,Y_{1}\rangle \right)  \right]  \text{ for }t\in
\mathbb{R}\text{.}%
\]
Since $\mathbb{\tilde{E}}[\cdot]$ is a convex expectation, it is easy to check
that $f(\cdot)$ is a convex function on $\mathbb{R}$. Thus
\[
f_{+}^{\prime}(0)=\lim_{\delta \downarrow0}\frac{f(\delta)-f(0)}{\delta}%
=\lim_{\delta \downarrow0}\frac{f(\delta)}{\delta}%
\]
exists, which implies $\lim_{n\rightarrow \infty}n\mathbb{\tilde{E}}\left[
\frac{1}{n}(\frac{1}{2}\langle AX_{1},X_{1}\rangle+\langle p,Y_{1}%
\rangle)\right]  =f_{+}^{\prime}(0)$.
\end{proof}

The following proposition shows that $\tilde{G}(\cdot)$ defined in
(\ref{e3-2}) satisfies the positive homogeneity.

\begin{proposition}
\label{pro-14}Let a nonlinear expectation $\mathbb{\tilde{E}}[\cdot]$ be
dominated by a sublinear expectation $\mathbb{\hat{E}}[\cdot]$ on
$(\Omega,\mathcal{H})$, and let $(X_{1},Y_{1})$ be a $2d$-dimensional random
vector satisfying (\ref{e3-5}). Suppose that (\ref{e3-2}) holds true for each
$(A,p)\in \mathbb{S}_{d}\times \mathbb{R}^{d}$. Then, for each $(A,p)\in
\mathbb{S}_{d}\times \mathbb{R}^{d}$,
\begin{equation}
\tilde{G}(\lambda A,\lambda p)=\lambda \tilde{G}(A,p)\text{ for }\lambda \geq0.
\label{e3-6}%
\end{equation}

\end{proposition}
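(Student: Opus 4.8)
The plan is to deduce the positive homogeneity of $\tilde{G}(\cdot)$ directly from the defining limit (\ref{e3-2}) together with the positive homogeneity $\mathbb{\hat{E}}[\lambda\xi]=\lambda\mathbb{\hat{E}}[\xi]$ of the dominating sublinear expectation, which we use only to guarantee that all quantities in sight are finite. Fix $(A,p)\in\mathbb{S}_d\times\mathbb{R}^d$ and $\lambda>0$ (the case $\lambda=0$ being trivial since $\tilde{G}(0,0)=\lim_n n\mathbb{\tilde{E}}[0]=0$). Write $Z:=\tfrac12\langle AX_1,X_1\rangle+\langle p,Y_1\rangle$, which lies in $L^1(\Omega)$ by (\ref{e3-5}). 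Then for every $n$,
\[
n\,\mathbb{\tilde{E}}\!\left[\tfrac1n\bigl(\tfrac12\langle\lambda AX_1,X_1\rangle+\langle\lambda p,Y_1\rangle\bigr)\right]
= n\,\mathbb{\tilde{E}}\!\left[\tfrac{\lambda}{n}Z\right]
= \lambda\cdot\frac{n}{\lfloor n/\lambda\rfloor}\cdot\lfloor n/\lambda\rfloor\,\mathbb{\tilde{E}}\!\left[\tfrac{1}{\lfloor n/\lambda\rfloor}\cdot\tfrac{\lfloor n/\lambda\rfloor\lambda}{n}Z\right].
\]
That last rewriting is a little awkward because $n/\lambda$ need not be an integer, so I would instead argue along the subsequence $n=n_k$ with $n_k/\lambda$ an integer when $\lambda$ is rational, and then pass to general $\lambda$ by a continuity/monotonicity argument; but the cleaner route is below.

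The cleaner approach is to reparametrize the limit. By (\ref{e3-2}), $\tilde{G}(A,p)=\lim_{n\to\infty}n\mathbb{\tilde{E}}[\tfrac1n Z]$; equivalently, setting $g(t):=\mathbb{\tilde{E}}[tZ]$ for $t\ge 0$, the one-sided derivative $g'_+(0)=\lim_{t\downarrow 0}g(t)/t$ exists and equals $\tilde{G}(A,p)$, exactly as in the proof of Proposition \ref{pro-13} (note $g$ need not be convex here, but the hypothesis (\ref{e3-2}) is precisely the assertion that this limit along $t=1/n$ exists; one first upgrades ``limit along $1/n$'' to ``limit as $t\downarrow 0$'' using that $|g(t)-g(s)|\le\mathbb{\hat{E}}[|t-s|\,|Z|]=|t-s|\,\mathbb{\hat{E}}[|Z|]$, so $g$ is Lipschitz and the limit along any sequence $t\downarrow 0$ coincides). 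Now for $\lambda>0$, the function $h(t):=\mathbb{\tilde{E}}[t(\lambda Z)]=g(\lambda t)$ satisfies
\[
\tilde{G}(\lambda A,\lambda p)=\lim_{t\downarrow 0}\frac{h(t)}{t}=\lim_{t\downarrow 0}\frac{g(\lambda t)}{t}=\lambda\lim_{s\downarrow 0}\frac{g(s)}{s}=\lambda\,\tilde{G}(A,p),
\]
using the substitution $s=\lambda t$. This gives (\ref{e3-6}).

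The only genuine obstacle is the first upgrade: the hypothesis (\ref{e3-2}) only asserts existence of the limit of $n\mathbb{\tilde{E}}[\tfrac1n Z]$ along the integer sequence, i.e. of $g(1/n)\cdot n$, whereas the substitution argument needs $\lim_{t\downarrow 0}g(t)/t$. I expect this to be handled by the Lipschitz bound $|g(t)-g(s)|\le|t-s|\,\mathbb{\hat{E}}[|Z|]$ noted above: given $t\in(1/(n+1),1/n]$, we have $|g(t)/t - n g(1/n)|\le |g(t)-g(1/n)|/t + |g(1/n)|\,|1/t - n|$, and both terms are controlled by $O(1/n)$ times $\mathbb{\hat{E}}[|Z|]$ plus the bound $|g(1/n)|\le \tfrac1n\mathbb{\hat{E}}[|Z|]$, so $g(t)/t$ and $n g(1/n)$ share the same limit. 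Once that is in hand the homogeneity is immediate, so I would present the Lipschitz estimate first and then the two-line substitution. $\Box$
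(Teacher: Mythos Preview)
Your argument is correct. The key ingredient is the same as in the paper: the domination of $\mathbb{\tilde{E}}$ by $\mathbb{\hat{E}}$ gives the Lipschitz bound $|g(t)-g(s)|\le|t-s|\,\mathbb{\hat{E}}[|Z|]$ for $g(t):=\mathbb{\tilde{E}}[tZ]$, and together with $g(0)=0$ this is exactly what makes the passage from the integer sequence $1/n$ to arbitrary $\lambda>0$ possible. Your bookkeeping for the upgrade (comparing $g(t)/t$ with $ng(1/n)$ for $t\in(1/(n+1),1/n]$) is fine; both error terms are indeed $O(1/n)$.

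The paper organizes the same idea differently: instead of first upgrading the limit to continuous $t\downarrow 0$ and then substituting $s=\lambda t$, it first establishes Lipschitz continuity of $\lambda\mapsto\tilde{G}(\lambda A,\lambda p)$ directly (same estimate, taken at the level of the limit rather than of $g$), then verifies $\tilde{G}(\lambda A,\lambda p)=\lambda\tilde{G}(A,p)$ for every positive rational $\lambda=n_1/n_2$ by passing to the integer subsequence $n=n_1 i$ and rewriting $n_1 i\,\mathbb{\tilde{E}}[\tfrac{\lambda}{n_1 i}Z]=\lambda\cdot n_2 i\,\mathbb{\tilde{E}}[\tfrac{1}{n_2 i}Z]$, and finally extends to all $\lambda\ge 0$ by density and the Lipschitz bound. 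Your route avoids the rational/density step and makes the homogeneity a one-line substitution once the continuous limit is in hand; the paper's route avoids having to argue the upgrade to continuous $t$. Either way the content is the same Lipschitz estimate, and both proofs are of comparable length.
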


\begin{proof}
Since $\mathbb{\tilde{E}}[\cdot]$ is dominated by $\mathbb{\hat{E}}[\cdot]$,
we obtain that, for each $(A,p)\in \mathbb{S}_{d}\times \mathbb{R}^{d}$,
$\lambda_{1}$, $\lambda_{2}\geq0$,
\begin{align*}
&  \left \vert n\mathbb{\tilde{E}}\left[  \frac{1}{n}\left(  \frac{1}{2}%
\langle \lambda_{1}AX_{1},X_{1}\rangle+\langle \lambda_{1}p,Y_{1}\rangle \right)
\right]  -n\mathbb{\tilde{E}}\left[  \frac{1}{n}\left(  \frac{1}{2}%
\langle \lambda_{2}AX_{1},X_{1}\rangle+\langle \lambda_{2}p,Y_{1}\rangle \right)
\right]  \right \vert \\
&  \leq \mathbb{\hat{E}}\left[  \left \vert \frac{1}{2}\langle(\lambda
_{1}-\lambda_{2})AX_{1},X_{1}\rangle+\langle(\lambda_{1}-\lambda_{2}%
)p,Y_{1}\rangle \right \vert \right] \\
&  =|\lambda_{1}-\lambda_{2}|\mathbb{\hat{E}}\left[  \left \vert \frac{1}%
{2}\langle AX_{1},X_{1}\rangle+\langle p,Y_{1}\rangle \right \vert \right]  ,
\end{align*}
which implies%
\begin{equation}
|\tilde{G}(\lambda_{1}A,\lambda_{1}p)-\tilde{G}(\lambda_{2}A,\lambda
_{2}p)|\leq|\lambda_{1}-\lambda_{2}|\mathbb{\hat{E}}\left[  \left \vert
\frac{1}{2}\langle AX_{1},X_{1}\rangle+\langle p,Y_{1}\rangle \right \vert
\right]  . \label{e3-7}%
\end{equation}
For each given positive rational number $\lambda$, there exist two positive
integers $n_{1}$ and $n_{2}$ such that $\lambda=n_{1}/n_{2}$. Then%
\begin{align*}
\tilde{G}(\lambda A,\lambda p)  &  =\lim_{i\rightarrow \infty}n_{1}%
i\mathbb{\tilde{E}}\left[  \frac{1}{n_{1}i}\left(  \frac{1}{2}\langle \lambda
AX_{1},X_{1}\rangle+\langle \lambda p,Y_{1}\rangle \right)  \right] \\
&  =\lim_{i\rightarrow \infty}\lambda n_{2}i\mathbb{\tilde{E}}\left[  \frac
{1}{n_{2}i}\left(  \frac{1}{2}\langle \lambda AX_{1},X_{1}\rangle
+\langle \lambda p,Y_{1}\rangle \right)  \right] \\
&  =\lambda \tilde{G}(A,p)
\end{align*}
for each $(A,p)\in \mathbb{S}_{d}\times \mathbb{R}^{d}$. By (\ref{e3-7}), we
obtain (\ref{e3-6}).
\end{proof}

In the following, we give some examples.

\begin{example}
\label{ex-2}Let $\{X_{n}\}_{n=1}^{\infty}$ be a sequence of $d$-dimensional
random vectors. Similar to Example \ref{ex-1}, we only need to provide the
distribution of $X_{1}$. Let $\mathcal{P}$ be a given weakly compact and
convex set of probability measures on $(\mathbb{R}^{d},\mathcal{B}%
(\mathbb{R}^{d}))$ satisfying%
\[
\int_{\mathbb{R}^{d}}xdP=0\text{ for each }P\in \mathcal{P}\text{ and }%
\lim_{N\rightarrow \infty}\sup_{P\in \mathcal{P}}\int_{\mathbb{R}^{d}}%
(|x|^{2}-N)^{+}dP=0.
\]
Define%
\[
\mathbb{\hat{E}}[\phi(X_{1})]=\sup_{P\in \mathcal{P}}\int_{\mathbb{R}^{d}}%
\phi(x)dP\text{ for }\phi \in C_{b.Lip}(\mathbb{R}^{d}).
\]
It is easy to verify that (\ref{e3-1}) holds true. Let $\rho:\mathcal{P}%
\rightarrow \lbrack0,\infty)$ be a given convex function such that
$\{P\in \mathcal{P}:\rho(P)=0\}$ is nonempty. Define%
\[
\mathbb{\tilde{E}}[\phi(X_{1})]=\sup_{P\in \mathcal{P}}\left \{  \int
_{\mathbb{R}^{d}}\phi(x)dP-\rho(P)\right \}  \text{ for }\phi \in C_{b.Lip}%
(\mathbb{R}^{d}).
\]
It is easy to check that $\mathbb{\tilde{E}}[\cdot]$ is a convex expectation
and $\mathbb{\tilde{E}}[\cdot]$ is dominated by $\mathbb{\hat{E}}[\cdot]$.
Define%
\begin{equation}
\rho^{\ast}(P)=\underset{Q\rightarrow P}{\lim \inf}\rho(Q)\text{ for }%
P\in \mathcal{P}\text{,} \label{e3-08}%
\end{equation}
where $Q\rightarrow P$ is considered in the sense of weak convergence, and $Q$
can be equal to $P$. It is easy to verify that $\rho^{\ast}:\mathcal{P}%
\rightarrow \lbrack0,\infty)$ satisfies the following properties:

\begin{description}
\item[(i)] $\rho^{\ast}(P)\leq \rho(P)$ for each $P\in \mathcal{P}$.

\item[(ii)] For each given $P\in \mathcal{P}$, there exists $\{Q_{i}%
\}_{i=1}^{\infty}\subset \mathcal{P}$ such that $\{Q_{i}\}_{i=1}^{\infty}$
converges weakly to $P$ and $\rho(Q_{i})\rightarrow \rho^{\ast}(P)$ as
$i\rightarrow \infty$.

\item[(iii)] If $\{P_{i}\}_{i=1}^{\infty}\subset \mathcal{P}$ converges weakly
to $P$, then
\begin{equation}
\rho^{\ast}(P)\leq \underset{i\rightarrow \infty}{\lim \inf}\rho^{\ast}(P_{i}).
\label{e3-8}%
\end{equation}

\end{description}

Now we prove that%
\begin{equation}
\mathbb{\tilde{E}}[\phi(X_{1})]=\sup_{P\in \mathcal{P}}\left \{  \int
_{\mathbb{R}^{d}}\phi(x)dP-\rho^{\ast}(P)\right \}  \text{ for }\phi \in
C_{b.Lip}(\mathbb{R}^{d}). \label{e3-9}%
\end{equation}
Since $\rho^{\ast}(\cdot)\leq \rho(\cdot)$, we have $\mathbb{\tilde{E}}%
[\phi(X_{1})]\leq \sup_{P\in \mathcal{P}}\{ \int_{\mathbb{R}^{d}}\phi
(x)dP-\rho^{\ast}(P)\}$. For each given $P\in \mathcal{P}$, we can find
$\{Q_{i}\}_{i=1}^{\infty}$ satisfying (ii). Then%
\[
\int_{\mathbb{R}^{d}}\phi(x)dP-\rho^{\ast}(P)=\lim_{i\rightarrow \infty}\left(
\int_{\mathbb{R}^{d}}\phi(x)dQ_{i}-\rho(Q_{i})\right)  \leq \mathbb{\tilde{E}%
}[\phi(X_{1})],
\]
which implies $\sup_{P\in \mathcal{P}}\{ \int_{\mathbb{R}^{d}}\phi
(x)dP-\rho^{\ast}(P)\} \leq \mathbb{\tilde{E}}[\phi(X_{1})]$. Thus we obtain
(\ref{e3-9}). It is clear that%
\[
n\mathbb{\tilde{E}}\left[  \frac{1}{n}\left(  \frac{1}{2}\langle AX_{1}%
,X_{1}\rangle \right)  \right]  =\sup_{P\in \mathcal{P}}\left \{  \int
_{\mathbb{R}^{d}}\frac{1}{2}\langle Ax,x\rangle dP-n\rho^{\ast}(P)\right \}  .
\]
Set $\mathcal{P}^{\ast}:=\{P\in \mathcal{P}:\rho^{\ast}(P)=0\}$ and
$\mathcal{P}_{k}:=\{P\in \mathcal{P}:\rho^{\ast}(P)\leq1/k\}$ for
$k\in \mathbb{N}$. It follows from (\ref{e3-8}) that $\mathcal{P}^{\ast}$ and
$\mathcal{P}_{k}$, $k\in \mathbb{N}$, are weakly compact. In the following, we
show that%
\begin{equation}
\lim_{n\rightarrow \infty}\sup_{P\in \mathcal{P}}\left \{  \int_{\mathbb{R}^{d}%
}\frac{1}{2}\langle Ax,x\rangle dP-n\rho^{\ast}(P)\right \}  =\sup
_{P\in \mathcal{P}^{\ast}}\int_{\mathbb{R}^{d}}\frac{1}{2}\langle Ax,x\rangle
dP. \label{e3-10}%
\end{equation}
It is obvious that
\begin{equation}
\sup_{P\in \mathcal{P}}\left \{  \int_{\mathbb{R}^{d}}\frac{1}{2}\langle
Ax,x\rangle dP-n\rho^{\ast}(P)\right \}  \geq \sup_{P\in \mathcal{P}^{\ast}}%
\int_{\mathbb{R}^{d}}\frac{1}{2}\langle Ax,x\rangle dP \label{e3-11}%
\end{equation}
for $n\geq1$. Since%
\begin{align*}
&  \sup_{P\in \mathcal{P}}\left \{  \int_{\mathbb{R}^{d}}\frac{1}{2}\langle
Ax,x\rangle dP-n\rho^{\ast}(P)\right \} \\
&  \leq \max \left \{  \sup_{P\in \mathcal{P}_{k}}\int_{\mathbb{R}^{d}}\frac{1}%
{2}\langle Ax,x\rangle dP,\sup_{P\in \mathcal{P}\backslash \mathcal{P}_{k}%
}\left \{  \int_{\mathbb{R}^{d}}\frac{1}{2}\langle Ax,x\rangle dP-\frac{n}%
{k}\right \}  \right \}
\end{align*}
for each $k\in \mathbb{N}$, we obtain%
\begin{equation}
\lim_{n\rightarrow \infty}\sup_{P\in \mathcal{P}}\left \{  \int_{\mathbb{R}^{d}%
}\frac{1}{2}\langle Ax,x\rangle dP-n\rho^{\ast}(P)\right \}  \leq \sup
_{P\in \mathcal{P}_{k}}\int_{\mathbb{R}^{d}}\frac{1}{2}\langle Ax,x\rangle dP
\label{e3-12}%
\end{equation}
for each $k\in \mathbb{N}$. Note that $\sup_{Q\in \mathcal{P}}\int
_{\mathbb{R}^{d}}(|x|^{2}-N)^{+}dQ\rightarrow0$ as $N\rightarrow \infty$. Then
we obtain $\int_{\mathbb{R}^{d}}\frac{1}{2}\langle Ax,x\rangle dQ_{i}%
\rightarrow \int_{\mathbb{R}^{d}}\frac{1}{2}\langle Ax,x\rangle dQ$ by Lemma 29
in \cite{DHP11} when $\{Q_{i}\}_{i=1}^{\infty}$ converges weakly to $Q$. Thus
we can find $P_{k}\in \mathcal{P}_{k}$ such that%
\begin{equation}
\sup_{P\in \mathcal{P}_{k}}\int_{\mathbb{R}^{d}}\frac{1}{2}\langle Ax,x\rangle
dP=\int_{\mathbb{R}^{d}}\frac{1}{2}\langle Ax,x\rangle dP_{k}. \label{e3-13}%
\end{equation}
Since $\mathcal{P}$ is weakly compact, we can find a subsequence $\{P_{k_{i}%
}\}_{i=1}^{\infty}$ such that $\{P_{k_{i}}\}_{i=1}^{\infty}$ converges weakly
to $P^{\ast}$. By (\ref{e3-8}), it is easy to get $P^{\ast}\in \mathcal{P}%
^{\ast}$. Then, by (\ref{e3-12}) and (\ref{e3-13}), we deduce
\begin{equation}
\lim_{n\rightarrow \infty}\sup_{P\in \mathcal{P}}\left \{  \int_{\mathbb{R}^{d}%
}\frac{1}{2}\langle Ax,x\rangle dP-n\rho^{\ast}(P)\right \}  \leq
\lim_{i\rightarrow \infty}\int_{\mathbb{R}^{d}}\frac{1}{2}\langle Ax,x\rangle
dP_{k_{i}}=\int_{\mathbb{R}^{d}}\frac{1}{2}\langle Ax,x\rangle dP^{\ast}.
\label{e3-14}%
\end{equation}
Thus we obtain (\ref{e3-10}) by (\ref{e3-11}) and (\ref{e3-14}), which implies%
\[
\tilde{G}(A)=\sup_{P\in \mathcal{P}^{\ast}}\int_{\mathbb{R}^{d}}\frac{1}%
{2}\langle Ax,x\rangle dP\text{ for }A\in \mathbb{S}_{d}.
\]
Thus, by Theorem \ref{th-12}, we obtain%
\[
\lim_{n\rightarrow \infty}\mathbb{\tilde{E}}\left[  \phi \left(  \frac{1}%
{\sqrt{n}}\sum_{i=1}^{n}X_{i}\right)  \right]  =\tilde{u}^{\phi}(1,0)\text{
for }\phi \in C_{b.Lip}(\mathbb{R}^{d}),
\]
where $\tilde{u}^{\phi}$ is the unique viscosity solution of the following
PDE:%
\[
\partial_{t}u-\tilde{G}(D^{2}u)=0\text{, }u(0,x)=\phi(x).
\]

Let $\{ \rho_{\alpha}:\alpha \in I\}$ be a given family of nonnegative convex
functions on $\mathcal{P}$ such that $\{P\in \mathcal{P}:\rho_{\alpha}(P)=0\}$
is nonempty for each $\alpha \in I$. Define%
\[
\mathbb{\bar{E}}[\phi(X_{1})]=\inf_{\alpha \in I}\sup_{P\in \mathcal{P}}\left \{
\int_{\mathbb{R}^{d}}\phi(x)dP-\rho_{\alpha}(P)\right \}  \text{ for }\phi \in
C_{b.Lip}(\mathbb{R}^{d}).
\]
It is easy to check that $\mathbb{\bar{E}}[\cdot]$ is a nonlinear expectation
and $\mathbb{\bar{E}}[\cdot]$ is dominated by $\mathbb{\hat{E}}[\cdot]$. For
each $A\in \mathbb{S}_{d}$,
\[
n\mathbb{\bar{E}}\left[  \frac{1}{n}\left(  \frac{1}{2}\langle AX_{1}%
,X_{1}\rangle \right)  \right]  =\inf_{\alpha \in I}\sup_{P\in \mathcal{P}%
}\left \{  \int_{\mathbb{R}^{d}}\frac{1}{2}\langle Ax,x\rangle dP-n\rho
_{\alpha}^{\ast}(P)\right \}  ,
\]
where the definition of $\rho_{\alpha}^{\ast}(P)$ is similar to that of
$\rho^{\ast}(P)$ in (\ref{e3-08}). Set $\mathcal{P}_{\alpha}^{\ast}%
:=\{P\in \mathcal{P}:\rho_{\alpha}^{\ast}(P)=0\}$ for each $\alpha \in I$. Then
we have
\[
n\mathbb{\bar{E}}\left[  \frac{1}{n}\left(  \frac{1}{2}\langle AX_{1}%
,X_{1}\rangle \right)  \right]  \geq \inf_{\alpha \in I}\sup_{P\in \mathcal{P}%
_{\alpha}^{\ast}}\int_{\mathbb{R}^{d}}\frac{1}{2}\langle Ax,x\rangle dP.
\]
By (\ref{e3-10}), we obtain%
\begin{align*}
\underset{n\rightarrow \infty}{\lim \sup}n\mathbb{\bar{E}}\left[  \frac{1}%
{n}\left(  \frac{1}{2}\langle AX_{1},X_{1}\rangle \right)  \right]   &
\leq \lim_{n\rightarrow \infty}\sup_{P\in \mathcal{P}}\left \{  \int
_{\mathbb{R}^{d}}\frac{1}{2}\langle Ax,x\rangle dP-n\rho_{\alpha}^{\ast
}(P)\right \} \\
&  =\sup_{P\in \mathcal{P}_{\alpha}^{\ast}}\int_{\mathbb{R}^{d}}\frac{1}%
{2}\langle Ax,x\rangle dP
\end{align*}
for each fixed $\alpha \in I$. Thus we get%
\[
\bar{G}(A)=\lim_{n\rightarrow \infty}n\mathbb{\bar{E}}\left[  \frac{1}%
{n}\left(  \frac{1}{2}\langle AX_{1},X_{1}\rangle \right)  \right]
=\inf_{\alpha \in I}\sup_{P\in \mathcal{P}_{\alpha}^{\ast}}\int_{\mathbb{R}^{d}%
}\frac{1}{2}\langle Ax,x\rangle dP
\]
for $A\in \mathbb{S}_{d}$. Thus, by Theorem \ref{th-12}, we obtain
\[
\lim_{n\rightarrow \infty}\mathbb{\bar{E}}\left[  \phi \left(  \frac{1}{\sqrt
{n}}\sum_{i=1}^{n}X_{i}\right)  \right]  =\bar{u}^{\phi}(1,0)\text{ for }%
\phi \in C_{b.Lip}(\mathbb{R}^{d}),
\]
where $\bar{u}^{\phi}$ is the unique viscosity solution of the following PDE:%
\[
\partial_{t}u-\bar{G}(D^{2}u)=0\text{, }u(0,x)=\phi(x).
\]

\end{example}

\begin{example}
Let $(B_{t})_{t\geq0}$ be a standard $d$-dimensional Brownian motion on a
complete probability space $(\Omega,\mathcal{F},P)$. The augmentation of
$(\mathcal{F}_{t}^{B})_{t\geq0}$ is denoted by $(\mathcal{F}_{t})_{t\geq0}$.
Set $\mathcal{H}=\cup_{n=1}^{\infty}L^{2}(\mathcal{F}_{n})$, where%
\[
L^{2}(\mathcal{F}_{n}):=\{ \xi \in \mathcal{F}_{n}:E[|\xi|^{2}]<\infty \}.
\]
Let $g:\mathbb{R}^{d}\rightarrow \mathbb{R}$ satisfy $g(0)=0$ and
$|g(z)-g(z^{\prime})|\leq \mu|z-z^{\prime}|$ for each $z$, $z^{\prime}%
\in \mathbb{R}^{d}$, where $\mu>0$ is a constant. For each $\xi \in \mathcal{H}$,
there exists an $n\geq1$ such that $\xi \in L^{2}(\mathcal{F}_{n})$. Consider
the following two backward stochastic differential equations (BSDEs):%
\[
\tilde{Y}_{t}=\xi+\int_{t}^{n}g(\tilde{Z}_{s})ds-\int_{t}^{n}\tilde{Z}%
_{s}dB_{s},
\]%
\[
Y_{t}=\xi+\int_{t}^{T}\mu|Z_{s}|ds-\int_{t}^{T}Z_{s}dB_{s}.
\]
Define%
\[
\mathbb{\tilde{E}}[\xi]=\tilde{Y}_{0}\text{ and }\mathbb{\hat{E}}[\xi]=Y_{0}.
\]
It is easy to check that the nonlinear expectation $\mathbb{\tilde{E}}[\cdot]$
is dominated by the sublinear expectation $\mathbb{\hat{E}}[\cdot]$ on
$(\Omega,\mathcal{H})$. Set $Y_{n}=B_{n}-B_{n-1}$ for $n\in \mathbb{N}$. It is
easy to verify that $\mathbb{\hat{E}}\left[  (|Y_{1}|-N)^{+}\right]
\rightarrow0$ as $N\rightarrow \infty$, $Y_{n+1}\overset{d}{=}Y_{1}$ and
$Y_{n+1}$ is independent of $(Y_{1}$,$\ldots$,$Y_{n})$ for $n\geq1$ under
$\mathbb{\tilde{E}}[\cdot]$ and $\mathbb{\hat{E}}[\cdot]$ respectively. By the
definition of $\mathbb{\tilde{E}}[\cdot]$, we have for each fixed
$p\in \mathbb{R}^{d}$,%
\[
n\mathbb{\tilde{E}}\left[  \frac{1}{n}\langle p,Y_{1}\rangle \right]
=n\tilde{Y}_{0}^{n},
\]
where%
\[
\tilde{Y}_{t}^{n}=\frac{1}{n}\langle p,B_{1}\rangle+\int_{t}^{1}g(\tilde
{Z}_{s}^{n})ds-\int_{t}^{1}\tilde{Z}_{s}^{n}dB_{s}.
\]
Set $g_{n}(z)=ng(z/n)$ for $z\in \mathbb{R}^{d}$. Then $g_{n}(0)=0$ and
$|g_{n}(z)-g_{n}(z^{\prime})|\leq \mu|z-z^{\prime}|$ for each $z$, $z^{\prime
}\in \mathbb{R}^{d}$. It is easy to verify that $(Y^{n},Z^{n})=(n\tilde{Y}%
^{n},n\tilde{Z}^{n})$ satisfies the following BSDE:%
\begin{equation}
Y_{t}^{n}=\langle p,B_{1}\rangle+\int_{t}^{1}g_{n}(Z_{s}^{n})ds-\int_{t}%
^{1}Z_{s}^{n}dB_{s}. \label{e3-16}%
\end{equation}
Suppose that
\begin{equation}
g_{n}(z)\rightarrow \bar{g}(z)\text{ as }n\rightarrow \infty \text{ for each
}z\in \mathbb{R}^{d}. \label{e3-15}%
\end{equation}
It is easy to check that $\bar{g}(0)=0$ and $|\bar{g}(z)-\bar{g}(z^{\prime
})|\leq \mu|z-z^{\prime}|$ for each $z$, $z^{\prime}\in \mathbb{R}^{d}$.
Consider the following BSDE:%
\begin{equation}
\bar{Y}_{t}=\langle p,B_{1}\rangle+\int_{t}^{1}\bar{g}(\bar{Z}_{s})ds-\int
_{t}^{1}\bar{Z}_{s}dB_{s}. \label{e3-17}%
\end{equation}
By the the standard estimate for BSDEs (\ref{e3-16}) and (\ref{e3-17}), we
obtain%
\[
|Y_{0}^{n}-\bar{Y}_{0}|^{2}\leq CE\left[  \int_{0}^{1}|g_{n}(\bar{Z}_{s}%
)-\bar{g}(\bar{Z}_{s})|^{2}ds\right]  ,
\]
where the constant $C$ only depends on $\mu$. Then we get $Y_{0}%
^{n}\rightarrow \bar{Y}_{0}$ as $n\rightarrow \infty$ by the dominated
convergence theorem. It is easy to verify that
\[
(\bar{Y}_{t},\bar{Z}_{t})=(\langle p,B_{t}\rangle+\bar{g}(p)(1-t),p)\text{ for
}t\in \lbrack0,1]
\]
satisfies (\ref{e3-17}), which implies%
\[
\tilde{G}(p)=\lim_{n\rightarrow \infty}Y_{0}^{n}=\bar{Y}_{0}=\bar{g}(p)\text{
for }p\in \mathbb{R}^{d}.
\]
Thus, under the assumption (\ref{e3-15}), we obtain
\[
\lim_{n\rightarrow \infty}\mathbb{\tilde{E}}\left[  \phi \left(  \frac{B_{n}}%
{n}\right)  \right]  =u^{\phi}(1,0)\text{ for }\phi \in C_{b.Lip}%
(\mathbb{R}^{d})
\]
by Theorem \ref{th-12}, where $u^{\phi}$ is the unique viscosity solution of
the following PDE:%
\[
\partial_{t}u-\bar{g}(Du)=0\text{, }u(0,x)=\phi(x).
\]

\end{example}

\  \  \

\end{document}